\theoremstyle {definition}
\newtheorem{thm}{Theorem}[section]
\newtheorem{Lemma}[thm]{Lemma}
\newtheorem{Proposition}[thm]{Proposition}
\newtheorem{cor}[thm]{Corollary}
\newtheorem{Remark}[thm]{Remark}
\newtheorem{Definition}[thm]{Definition}
\numberwithin{equation}{section}
\newcommand\Cite[2] {\cite[#1]{#2}}
 \newcommand*{\relrelbarsep}{.386ex}
 \newcommand*{\relrelbar}{\mathrel{\mathpalette\@relrelbar\relrelbarsep}}
 \newcommand*{\@relrelbar}[2]{\raise#2\hbox to 0pt{$\m@th#1\relbar$\hss}%
     \lower#2\hbox{$\m@th#1\relbar$}}
 \providecommand*{\rightrightarrowsfill@}{%
       \arrowfill@\relrelbar\relrelbar\rightrightarrows}
 \providecommand*{\leftleftarrowsfill@}{%
        \arrowfill@\leftleftarrows\relrelbar\relrelbar}
 \providecommand*{\xrightrightarrows}[2][]{%
           \ext@arrow 0359\rightrightarrowsfill@{#1}{#2}}
 \providecommand*{\xleftleftarrows}[2][]{%
     \ext@arrow 3095\leftleftarrowsfill@{#1}{#2}}
\def\act              {\triangleright}
\def\Act              {\,{\act}\,}
\def\be               {\begin{equation}}
\def\bearl            {\begin{array}{l}}
\def\bearll           {\begin{array}{ll}}
\def\botAle           {\,{\boxtimes_\Cala^{\mathrm{l.e.}}}}
\def\botAre           {\,{\boxtimes_\Cala^{\mathrm{r.e.}}}}
\def\boti             {\,{\boxtimes}\,}
\def\Boti             {{\boxtimes}}
\def\cala             {{\mathcal A}}
\def\Cala             {{\!\mathcal A}}
\def\calaopp          {{\CC{\mathcal A}}}
\def\calb             {{\mathcal B}}
\def\calbopp          {{\CC{\mathcal B}}}
\def\calc             {{\mathcal C}}
\def\calk             {{\mathcal K}}
\def\calm             {{\mathcal M}}
\def\calmopp          {{\CC{\mathcal M}}}
\def\caln             {{\mathcal N}}
\def\calnopp          {{\CC{\mathcal N}}}
\def\calp             {{\mathcal P}}
\def\calx             {{\mathcal X}}
\def\caly             {{\mathcal Y}}
\def\calz             {{\mathcal Z}}
\newcommand\cc[1]     {\overline{{#1}}}  
\newcommand\CC[1]     {\overline{{#1}}}  
\def\cent             {\mathcal{Z}}
\def\Comod            {\text{Comod}}
\def\ee               {\end{equation}}
\def\eear             {\end{array}}
\def\eq               {\,{=}\,}
\def\Fun              {{\mathcal F\!un}}
\def\Funbal           {{\mathcal F\!un}^{\rm bal}}
\def\Funle            {{\mathcal Lex}}
\def\Funlebal         {{\mathcal Lex}^{\rm bal}}
\def\Funre            {{\mathcal Rex}}
\def\Funrebal         {{\mathcal Rex}^{\rm bal}}
\def\Hom              {{\rm Hom}}
\def\id               {\mathrm{id}}
\def\Id               {\mathrm{Id}}
\def\ko               {{\ensuremath{\Bbbk}}}
\def\la               {{\rm l.a.}}
\def\Mod              {\text{Mod}}
\def\NM               {_{\caln,\calm}}
\newcommand\nxl[1]    {\\[#1mm]}
\newcommand\Nxl[1]    {\\[-1.3em]\\[#1mm]}
\def\one              {{\bf1}}
\def\opm              {^{\#}}
\def\opmm             {{}^{\#\!}}
\def\opmmm            {{}^{\#\!\!}}
\def\oti              {\,{\otimes}\,}
\def\Phile            {\Phi^{\rm l}}
\def\Phire            {\Phi^{\rm r}}
\def\Psile            {\Psi^{\rm l}}
\def\Psire            {\Psi^{\rm r}}
\def\ra               {{\rm r.a.}}
\def\ract             {\triangleleft}
\def\Ract             {\,{\ract}\,}
\def\TX               {T^{\calx}}
\def\TY               {T^{\caly}}
\def\TZ               {T}
\def\To               {\,{\to}\,}
\def\vect             {\ensuremath{\mathrm{vect}}}
\def\Vee              {{}^{\vee\!}}
\def\wPsile           {\widehat\Psi^{\rm l}}
\def\wPhile           {\widehat\Phi^{\rm l}}
\def\wPsire           {\widehat\Psi^{\rm r}}
\def\Z                {\mathbb{Z}}
\begin{document}

\thispagestyle{empty}
\begin{flushright}
   {\sf ZMP-HH/20-6}\\
   {\sf Hamburger$\;$Beitr\"age$\;$zur$\;$Mathematik$\;$Nr.$\;$828}\\[2mm] March 2020
\end{flushright}

\vskip 4.0em

\begin{center}{\bf \Large Module Eilenberg-Watts calculus }

\vskip 3em

 {\large \  \ J\"urgen Fuchs\,$^{\,a,c}, \quad$ Gregor Schaumann\,$^{\,b}, \quad$
 Christoph Schweigert\,$^{\,c}$
 }

 \vskip 12mm

 \it$^a$
 Teoretisk fysik, \ Karlstads Universitet\\
 Universitetsgatan 21, \ S\,--\,651\,88\, Karlstad
 \\[9pt] \it$^b$
 Institut f\"ur Mathematik, \ Universit\"at W\"urzburg\\ Mathematische Physik\\
 Emil-Fischer-Stra\ss e 31, \ D\,--\,97\,074 W\"urzburg
 \\[9pt] \it$^c$
 Fachbereich Mathematik, \ Universit\"at Hamburg\\ Bereich Algebra und Zahlentheorie\\
 Bundesstra\ss e 55, \ D\,--\,20\,146\, Hamburg

\end{center}

\vskip 5em

\noindent{\sc Abstract}\\[3pt]
The categorical formulation of the Eilenberg-Watts calculus relates, for any pair
of finite categories $\calm$ and $\caln$, the finite categories $\Funle(\caln,\calm)$
and $\Funre(\caln,\calm)$ of linear left or right exact functors and the Deligne 
product $\calnopp \boti \calm$ by adjoint equivalences.
We establish a variant of this calculus for the case that the finite categories
$\calm$ and $\caln$ are module categories over a finite tensor category.
This provides in particular canonical and explicitly computable equivalences
between categories of left or right exact module functors (or, more generally,
balanced functors) and certain twisted centers of bimodule categories.

\newpage

\section{Introduction}

It is textbook knowledge (see e.g.\ \Cite{Ch.\,39}{BRwi}) that any right exact linear functor
between categories of fini\-te-di\-men\-si\-o\-nal modules over fini\-te-di\-men\-si\-o\-nal
algebras over a field is isomorphic to the functor given by tensoring with a bimodule, 
while any left exact such functor is isomorphic to a Hom functor involving a bimodule. 
These isomorphisms, which date back to Eilenberg \cite{eile4} and Watts \cite{wattC}, 
are extremely useful, which partly accounts for the prominence of bimodules.
Still, from a modern perspective they are unsatisfactory -- they hide the fact that  
the underlying algebras only play the role of `coordinates'. Indeed, a Morita invariant
version exists. It is achieved by the following purely categorical formulation:
For any pair of finite linear categories $\cala$ and $\calb$ there is a commuting triangle
  \be
  \begin{tikzcd}[row sep=11ex,column sep=1.5em]
  ~ & \calaopp \boti \calb ~ \ar{dl}[xshift=-2pt]{\Phile} \ar[xshift=-2pt]{dr}[swap]{\Phire}
  & ~ \\
  \Funle(\cala,\calb) \ar[yshift=4pt]{rr}{} \ar[xshift=-12pt]{ur}[xshift=2pt]{\Psile}
  & ~ & \Funre(\cala,\calb) \ar[yshift=-4pt]{ll}{} \ar[xshift=12pt]{ul}[swap]{\Psire}
  \end{tikzcd}
  \label{eq:NoM-Lex-Rex}
  \ee
of two-sided adjoint equivalences between finite linear categories that can be 
explicitly expressed in categorical terms. Here $\Funle(\cala,\calb)$
and $\Funre(\cala,\calb)$ are categories of linear left and right exact functors, 
respectively, and $\calaopp \boti \calb$ is the Deligne product of the category $\calaopp$
opposite to $\cala$ with $\calb$. In particular, the role of the bimodule in the classical 
setting is taken over by an object of the category $\calaopp \boti \calb$.
These equivalences are given by \cite{shimi7,fScS2}
  \be
  \begin{array}{ll}
  \Phile_{} \equiv \Phile_{\!\cala,\calb}:
  & \calaopp \boti \calb \xrightarrow{~\simeq~} \Funle(\cala,\calb) \,,
  \Nxl2
  & \cc a \boti b \longmapsto \Hom_\cala(a,-) \oti b \,,
  \Nxl3
  \Psile_{} \equiv \Psile_{\!\cala,\calb}:
  & \Funle(\cala,\calb) \xrightarrow{~\simeq~} \calaopp \boti \calb \,,
  \Nxl2
  & F \longmapsto \int^{a\in\cala} \cc a \boti F(a) \,,
  \eear
  \label{Phile.Psile}
  \ee
and
  \be
  \begin{array}{lll}
  \Phire_{} \equiv \Phire_{\!\cala,\calb}:
  & \calaopp \boti \calb \xrightarrow{~\simeq~} \Funre(\cala,\calb) \,,
  \Nxl2
  & \cc a \boti b \longmapsto {\Hom_\cala(-,a)}^{\!*}_{} \oti b \,,
  \Nxl3
  \Psire_{} \equiv \Psire_{\!\cala,\calb}:
  & \Funre(\cala,\calb) \xrightarrow{~\simeq~} \calaopp \boti \calb \,,
  \Nxl2
  & G \longmapsto \int_{a\in\cala} \cc a \boti G(b) 
  \eear
  \label{Phire...Psire}
  \ee
respectively. 
	  
We refer to these equivalences as \emph{Eilenberg-Watts functors}. They can be used 
to set up an `Eilenberg-Watts calculus' which provides, at the level of linear categories,
e.g.\ a Morita invariant formulation of Nakayama functors \Cite{Sect.\,3.5}{fScS2} 
and embeds them into a Grothendieck-Verdier like picture by which they are related
to monoidal units. The Eilenberg-Watts calculus has also been a very
useful tool in the state-sum construction of a modular functor in \cite{fScS4}.
  
\medskip

In \Cite{Sect.\,4}{fScS2} the Eilenberg-Watts calculus was already applied to the situation
that the linear categories in question have the additional structure of finite module and 
bimodule categories over finite monoidal categories. It turned out that the Eilenberg-Watts 
functors can be combined with the structure of a module category to obtain a transparent
setting for several further results. Specifically, a link between the Eilenberg-Watts
calculus and Radford's theorem on the fourth power of the antipode of a Hopf algebra
was established, and an extension of the calculus to bimodule categories was discussed. This 
also led to the theory of relative Serre functors. These exist if and only if the module 
category is exact (in the sense of \Cite{Ch.\,7.5}{EGno}), and in that case they are 
related to Nakayama functors via the distinguished invertible object of the finite tensor 
category. Relative Serre functors, in turn, give rise to the notion of an inner-product 
structure on a module category over a finite tensor category \Cite{Def.\,5.2}{schaum5}, 
also known as a pivotal structure on an exact module category \Cite{Def.\,5.6}{shimi17}.
In the presence of a pivotal structure e.g.\ the inner Ends of the module category are not
just algebras, but even symmetric Frobenius algebras (see \Cite{Thm.\,3.15}{shimi20},
and \Cite{Thm.\,6.6}{schaum} for the result in the semisimple case). 

In the present contribution we further develop the interaction between module categories 
over monoidal categories and Eilenberg-Watts functors. We work over general finite tensor
categories, and in particular do not assume that they are endowed with a pivotal structure. 
As a consequence, the bidual functor and its powers constitute monoidal automorphisms of the
finite tensor category which generically are non-trivial. In the module setting, instead of 
functors and the Deligne product, the players in the game are now module functors and the 
relative Deligne product \cite{enoM} of module categories. The latter is conveniently 
described \Cite{Sect.\,2.3}{fScS} in terms of balancings, whereby also balanced functors 
and twisted centers, with the twisting by powers of the bidual functor (see Definitions 
\ref{def:enoM.3.1}\,--\,\ref{def:balancing-etc}), come into the game.

To study these structures, it is convenient to place them into a larger context involving 
(co)monads. To a comonad on a finite category one can associate induced comonads on
functor categories as well as (see Proposition \ref{proposition:induced-monad})
`transferred' comonads on other finite categories that are obtained by transporting
the comonad along two-sided adjoint equivalences, such as the Eilenberg-Watts
equivalences. Our main results are stated in Theorem \ref{thm:EW:coendzFz}: Based on  
canonical and explicitly computable equivalences between the categories of comodules
over different comonads that are obtained by such a transfer and on the resulting
universal properties (see Proposition \ref{Proposition:comonadEW}), we obtain the
\emph{module Eilenberg-Watts equivalences} \eqref{equation:factor-EW-module}, as
well as the isomorphisms \eqref{eq:Main-ModuleEW-concret} and \eqref{eq:EW:coendzz=...}
between coends over the twisted center $\cent^\kappa_{}(\calm)$ of a module category
and coends over $\calm$ itself.
As a special case, this gives a module version of the Eilenberg-Watts equivalences:
two-sided adjoint equivalences between certain twisted centers and categories of
left and right exact module functors, respectively. We present these in
Proposition \ref{prop:theprop} and Corollary \ref{cor:thecor}. We study this special
situation separately, and before the general case, because the reader might be more 
interested in module functors than in general balanced functors, and also because we 
can approach it in a more direct manner that does not require constructions involving
comonads. Along the way we also obtain a few other results that round the picture off. As
an example, we show in Proposition \ref{proposition:rel-Del} that certain twisted centers
have a natural structure of right and left exact relative Deligne products, respectively.

\medskip

In the sequel we freely use the framework of finite tensor categories and their
module and bimodule categories, as described e.g.\ in Chapters 6 and 7 of \cite{EGno},
as well as properties of ends and coends. The latter can e.g.\ be found in 
\cite{fuSc23} and in Section 2.2 of \cite{fScS2}. As one crucial relation let us 
mention explicitly that, as a version of the (co-)Yoneda lemma, for $F$ a linear
functor between finite categories $\cala$ and $\calb$ there are natural 
isomorphisms \Cite{Cor.\,1.4.5\,\&\,Ex.\,1.4.6}{RIeh}
  \be
  \int^{a \in \cala}\! \Hom_\cala(a,-) \otimes F(a) \,\cong\, F \,\cong
  \int_{\!a \in \cala}\! \Hom_\cala(-,a)^*_{} \otimes F(a)
  \label{eq:Yoneda}
  \ee
of linear functors. We also assume that the reader is familiar with the Deligne product 
$\boxtimes$ of finite linear categories \Cite{Sect.\,5}{deli}, which is universal for 
left exact as well as for right exact functors, with the relative Deligne product 
(see e.g.\ \Cite{Sect.\,2.5}{fScS}) $\boxtimes_\cala$ of right and left modules 
over a finite tensor category $\cala$, and with basic aspects of monads and modules
over monads (see e.g.\ \cite{brVi5}).

Throughout, we fix an algebraically closed field \ko\ and denote by $V^*_{}$ the 
dual of a \ko-vector space $V$. All vector spaces are fini\-te-di\-men\-si\-o\-nal 
\ko-vector spaces; by $\vect$ we denote the symmetric monoidal category of such vector
spaces and linear maps. All categories are taken to be abelian and finite \ko-linear,
and all functors and natural transformations are taken to be \ko-linear. 
The linear category opposite to $\cala$ is denoted by $\calaopp$. Our
conventions for dualities are as follows: The left and right dual of an object
$x$ are denoted by $\Vee x$ and $x^\vee$, respectively; the corresponding
evaluation and coevaluation morphisms lie in the morphism spaces
$\Hom(x \oti \Vee x,\one)$ and $\Hom(\one,\Vee x \oti x)$ for the left duality,
and in $\Hom(x^\vee \oti x,\one)$ and $\Hom(\one,x \oti x^\vee)$ for the right duality.


\section{More about the Eilenberg-Watts calculus}

Before turning to the context of module categories, we make a few further 
observations that are easy consequences of the treatment in \cite{fScS2}.

Let us first point out that $\Psile_{\!\cala,\calb}$ 
as defined in \eqref{Phile.Psile} maps a left exact functor
$F$ to a coend $\int^{a\in\cala} \cc a \boti F(a)$, that is, to an object $C$ in
$\calaopp \boti \calb$ endowed with dinatural structure morphisms
$\cc a\,{\boxtimes} F(a) \,{\to}\, C$ for every $a\,{\in}\,\cala$. Since $\Psile_{}$ 
is an equivalence of categories, it follows conversely that every left exact 
functor $F\colon \cala\To\calb$ between finite
categories must come with such canonical structural morphisms (that is, morphisms
$\Phile_{\!\cala,\calb}(\cc a \boti F(a)) \eq \Hom_\cala(a,-) \oti F(a)
\,{\xrightarrow{~~}}\, F$ for $a \,{\boxtimes} F(a) \,{\in}\, \calaopp \boti \calb$).
This is indeed the case, even for any linear functor, and one recovers the Yoneda
isomorphisms \eqref{eq:Yoneda}. Analogously, the equivalence $\Psire$ endows every
right exact functor between finite categories with the canonical structure of an end, 
given by the second Yoneda isomorphism in \eqref{eq:Yoneda}.

Next we note the following parameter version of the Eilenberg-Watts functors, from which
the equivalences \eqref{Phile.Psile} are recovered by taking $\calp$ to be $\vect$:

\begin{Lemma}\Cite{Lemma\,B.4}{fScS4} \label{Lemma:EW-adjun}
Let $\cala$, $\calb$ and $\calp$ be finite categories. There are adjoint equivalences 
  \be
  \begin{array}{cl}
  \Funle(\calp \boti \cala, \calb) \,\simeq\, \Funle(\calp, \calaopp \boti \calb) ~
  & \quad \text{and}
  \Nxl3
  \Funre(\calp \boti \cala, \calb) \,\simeq\, \Funre(\calp, \calaopp \boti \calb) \,.
  \eear
  \label{eq:EW-inLEXREX}
  \ee
\end{Lemma}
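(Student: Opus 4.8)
The plan is to reduce the claimed equivalences to the basic Eilenberg-Watts equivalences \eqref{Phile.Psile} and \eqref{Phire...Psire} together with the universal property of the Deligne product. Recall that $\boti$ is the universal recipient of left exact functors (and, separately, of right exact functors) in two variables; concretely, for finite categories $\calc$, $\cald$ and $\cale$ there are equivalences $\Funle(\calc \boti \cald, \cale) \simeq \Funle(\calc, \Funle(\cald, \cale))$, and likewise with $\Funre$ throughout. I would first apply this ``currying'' equivalence to the left-hand side of each line in \eqref{eq:EW-inLEXREX}: for the left exact case,
  \be
  \Funle(\calp \boti \cala, \calb) \,\simeq\, \Funle\bigl(\calp, \Funle(\cala,\calb)\bigr) \,,
  \ee
and similarly $\Funre(\calp \boti \cala, \calb) \simeq \Funre\bigl(\calp, \Funre(\cala,\calb)\bigr)$ in the right exact case.

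Next I would invoke the Eilenberg-Watts equivalences themselves. The equivalence $\Phile_{\!\cala,\calb}\colon \calaopp \boti \calb \xrightarrow{\simeq} \Funle(\cala,\calb)$ of \eqref{Phile.Psile} is an \emph{exact} equivalence of finite categories (being part of a two-sided adjoint pair, it is in particular both left and right exact, and its quasi-inverse $\Psile$ likewise), so postcomposition with it induces an equivalence $\Funle\bigl(\calp, \calaopp \boti \calb\bigr) \xrightarrow{\simeq} \Funle\bigl(\calp, \Funle(\cala,\calb)\bigr)$; here one uses that composing a left exact functor out of $\calp$ with an exact equivalence stays left exact, and that this assignment is an equivalence on functor categories with quasi-inverse given by postcomposition with $\Psile$. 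Chaining this with the currying equivalence above yields the first equivalence in \eqref{eq:EW-inLEXREX}. The second one is obtained in exactly the same way, using $\Phire_{\!\cala,\calb}$ and $\Psire_{\!\cala,\calb}$ from \eqref{Phire...Psire} in place of $\Phile$ and $\Psile$, and the right exact universal property of $\boti$.

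To upgrade these equivalences to \emph{adjoint} equivalences I would track the adjunctions through the construction. The currying equivalence $\Funle(\calc \boti \cald, \cale) \simeq \Funle(\calc, \Funle(\cald,\cale))$ is itself part of a two-sided adjoint equivalence (this is the content of the universality of $\boti$ for left exact functors, phrased with both adjoints), and postcomposition with the two-sided adjoint equivalence $(\Phile, \Psile)$ induces a two-sided adjoint equivalence on the functor categories, with units and counits obtained by whiskering those of $(\Phile,\Psile)$. A composite of two-sided adjoint equivalences is again one, with the evident composite units and counits; this gives the adjoint equivalence asserted in the Lemma. The main point requiring a little care — and the step I would expect to be the only real obstacle — is the claim that postcomposition with an exact equivalence $E\colon \cald_1 \to \cald_2$ yields an equivalence $\Funle(\calc,\cald_1) \to \Funle(\calc,\cald_2)$ of functor categories that is compatible with the relevant adjunctions; this is where one must use finiteness and exactness of $E$ (so that $E$ and its quasi-inverse preserve left exactness of functors out of $\calc$, and so that the natural isomorphisms $E \circ \Psile \cong \Id$, $\Psile \circ E \cong \Id$ whisker to natural isomorphisms of the induced functors). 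Everything else is a formal manipulation of adjoint equivalences.
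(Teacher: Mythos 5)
Your argument is correct in substance but takes a genuinely different and slightly less economical route than the paper. The paper's proof is very short: apply $\Psile$ to identify $\Funle(\calp \boti \cala, \calb)$ with $\cc{\calp \boti \cala} \boti \calb$, use the associativity isomorphism of the Deligne product to rewrite this as $\cc\calp \boti (\cc\cala \boti \calb)$, and then apply $\Phile$ to land in $\Funle(\calp, \calaopp \boti \calb)$; each of the three steps is a canonical adjoint equivalence, so the composite is too. You instead route through the iterated functor category $\Funle(\calp, \Funle(\cala, \calb))$ via a ``currying'' equivalence and then postcompose with $\Phile_{\!\cala,\calb}$. The conclusion is the same, but your currying step $\Funle(\calp \boti \cala, \calb) \simeq \Funle(\calp, \Funle(\cala,\calb))$ is not a direct restatement of the universal property of $\boti$ as you suggest: that property identifies $\Funle(\calp \boti \cala, \calb)$ with bifunctors $\calp \times \cala \to \calb$ left exact in each variable, and passing from there to $\Funle(\calp, \Funle(\cala,\calb))$ requires further verification (that limits in $\Funle(\cala,\calb)$ are computed pointwise and that currying preserves exactness), which you do not supply. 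In the finite setting the cleanest way to establish that currying equivalence is exactly to run the chain $\Funle(\calp \boti \cala, \calb) \simeq \cc\calp \boti \cc\cala \boti \calb \simeq \Funle(\calp, \calaopp \boti \calb)$ that the paper uses directly, so your intermediate lemma is not more elementary than the statement you are proving. Your remaining observations --- that postcomposition with an exact equivalence gives an equivalence of left-exact functor categories, and that a composite of two-sided adjoint equivalences is again one --- are correct and standard.
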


\begin{proof}
This follows immediately by observing that
  \be
  \Funle(\calp \boti \cala, \calb) \,\simeq\, \cc{\calp \boti \cala} \boxtimes \calb
  \,\simeq\, \cc{\calp} \boxtimes (\cc{\cala} \boti \calb)
  \,\simeq\, \Funle(\calp, \calaopp \boti \calb) \,,
  \ee
where the first and third equivalences are the ordinary Eilenberg-Watts functors
\eqref{Phile.Psile},
while the one in the middle comes from the associativity property of the Deligne product. 
\end{proof}

One virtue of the Eilenberg-Watts calculus is that it allows one to manipulate ends and
coends in ways that are not covered by the Yoneda isomorphisms \eqref{eq:Yoneda}.
Specifically, according to Proposition 3.4 of \cite{fScS2} 
for any left exact functor $F \,{\in}\, \Funle(\cala,\calb)$, one has
  \be
  \Hom_{\calaopp\boxtimes\calb} \big(-\,, \mbox{\large$\int$}^{a\in\cala}_{}
  \cc a \,{\boxtimes} F(a) \big) \,\cong
  \int^{a\in\cala}\!\! \Hom_{\calaopp\boxtimes\calb} \big(-\,, \cc a \,{\boxtimes} F(a) \big) \,,
  \label{eq:fScS2Prop3.4}
  \ee
and likewise a similar isomorphism for right exact functors.
As a consequence, while left exact functors always commute with ends and right exact functors 
commute with coends, in our situation we have in addition

\begin{Lemma} \label{Lemma:coend-commut}
Let $\cala,\calb,\calk$ and $\calx$ be finite linear categories.
\\[2pt]
{\rm (i)}
For any left exact functor $F \,{\in}\, \Funle(\cala \boti \calaopp\boti \calk, \calx)$ we have
  \be
  \int^{a \in \cala}\!\!\! F(a \boti \cc{a} \boti k)
  \,\cong\, F\big( \mbox{\large$\int$}^{a\in\cala}_{} a \boti \cc{a} \boti k \big)    
  \ee
for all $k \,{\in}\, \calk$. Analogously, for any right exact functor
$G \,{\in}\, \Funre(\cala \boti \calaopp\boti \calk, \calx)$ there is an isomorphism
$\int_{a} G(a \boti \cc a \boti k) \,{\cong}\, G(\int_{a} a \boti \cc a \boti k)$.
\\[2pt]
{\rm (ii)}
Let $F\colon \cala \boti \calaopp \boti \calb \boti \calbopp \boti \calk \,{\to}\, \calx$ 
be a left exact or a right exact functor. Then there is a canonical natural isomorphism
  \be
  \int^{a \in \cala}\!\!\!\! \int_{\!b \in \calb}\! F(a \boti \cc a \boti b \boti \cc b \boti {-})
  \,\cong \int_{\!b \in \calb}\! \int^{a \in \cala}\!\!\!
  F(a \boti \cc a \boti b \boti \cc b \boti {-})
  \ee
of functors from $\calk$ to $\calx$.
\end{Lemma}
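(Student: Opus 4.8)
The plan is to reduce part (ii) to part (i), and to obtain part (i) from a short coend computation whose only non-formal ingredient is the isomorphism \eqref{eq:fScS2Prop3.4}.

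For part (i), put $\calc:=\cala\boti\calaopp\boti\calk$. I would begin by observing that, up to the associativity and symmetry constraints of the Deligne product, the object $\int^{a}a\boti\cc a\boti k$ of $\calc$ is the image under the Eilenberg-Watts functor $\Psile$ of the exact functor $a\mapsto a\boti k\colon\cala\To\cala\boti\calk$: indeed $\Psile_{\cala,\cala\boti\calk}(-\boti k)=\int^{a}\cc a\boti a\boti k$, and the symmetry of $\boti$ carries this to $\int^{a}a\boti\cc a\boti k$. Applying \eqref{eq:fScS2Prop3.4} to the functor $-\boti k$ and transporting along this identification therefore yields a natural isomorphism $\Hom_\calc\big(-,\int^{a}a\boti\cc a\boti k\big)\cong\int^{a}\Hom_\calc\big(-,a\boti\cc a\boti k\big)$. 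I would then run the chain: by the co-Yoneda isomorphism \eqref{eq:Yoneda}, $F(X)\cong\int^{c\in\calc}\Hom_\calc(c,X)\oti F(c)$ for every object $X$ of $\calc$; substituting $X=\int^{a}a\boti\cc a\boti k$ and inserting the isomorphism just obtained (tensored with $F(c)$ and integrated over $c$) gives $F\big(\int^{a}a\boti\cc a\boti k\big)\cong\int^{c}\!\big(\int^{a}\Hom_\calc(c,a\boti\cc a\boti k)\big)\oti F(c)$; exchanging the two coends by Fubini and then collapsing the inner coend back via \eqref{eq:Yoneda} produces $\int^{a}F(a\boti\cc a\boti k)$, which is the assertion. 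The right exact version is the mirror computation, using the second co-Yoneda isomorphism $G(X)\cong\int_{c}\Hom_\calc(X,c)^{*}\oti G(c)$, the right exact analogue of \eqref{eq:fScS2Prop3.4} for $-\boti k$, and the duality $(\int^{a}V_{a})^{*}\cong\int_{a}V_{a}^{*}$ of finite-dimensional vector spaces (so that, after dualizing, Fubini for \emph{ends} applies); equivalently it follows from the left exact case by passing to the opposite categories.

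For part (ii) I would apply part (i) twice, each time after enlarging the parameter category, together with the elementary observation that a left exact functor preserves the limit $\int_{b}b\boti\cc b$ (which, as $-\boti(-)$ is exact in each slot, is formed inside the ambient Deligne product slot by slot). For $F$ left exact: left exactness of $F$ gives $\int_{b}F(a\boti\cc a\boti b\boti\cc b\boti k)\cong F\big(a\boti\cc a\boti(\int_{b}b\boti\cc b)\boti k\big)$; applying $\int^{a}$ and then part (i) with parameter category $\calb\boti\calbopp\boti\calk$ turns the left-hand side into $F\big((\int^{a}a\boti\cc a)\boti(\int_{b}b\boti\cc b)\boti k\big)$. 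Running the two steps in the other order -- first pulling $\int^{a}$ through $F$ by part (i), then the inner end through $F$ by left exactness -- identifies $\int_{b}\int^{a}F(a\boti\cc a\boti b\boti\cc b\boti k)$ with the very same object. Composing the two chains of isomorphisms yields the claim, and naturality in $k\in\calk$ (hence the statement about functors $\calk\To\calx$) is inherited from the naturality of the isomorphisms in part (i). For $F$ right exact one argues symmetrically, now using that $F$ preserves the colimit $\int^{a}a\boti\cc a$ and invoking the end version of part (i).

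The Fubini exchanges, the co-Yoneda collapses and the vector-space duality are routine; the step that requires care is the opening move of part (i). One must correctly match $\int^{a}a\boti\cc a\boti k$ with $\Psile_{\cala,\cala\boti\calk}(-\boti k)$ through the coherence isomorphisms of $\boti$ so that \eqref{eq:fScS2Prop3.4} is genuinely applicable, and then check that the co-Yoneda isomorphism $\int^{c}\Hom_\calc(c,X)\oti F(c)\cong F(X)$, being natural in $X$, is in particular natural in $a$ through $a\mapsto a\boti\cc a\boti k$ -- i.e.\ an isomorphism of the bifunctors underlying the outer coend -- so that it survives that coend. The same bookkeeping is the only subtlety in the right exact and opposite-category variants. (Note that in part (i) only linearity of $F$, not its exactness, enters; exactness of $F$ is used in part (ii) solely to preserve the end $\int_{b}b\boti\cc b$, respectively the coend $\int^{a}a\boti\cc a$.)
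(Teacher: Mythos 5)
Your part (i) is essentially the paper's argument (Yoneda, the isomorphism \eqref{eq:fScS2Prop3.4}, Fubini, Yoneda), just run in the reverse direction and with the identification $\int^a a\boti\cc a\boti k \cong \Psile(-\boti k)$ made explicit; that is a correct and clean presentation of the same chain. The remark at the end that exactness of $F$ is not used in part (i) is right and worth keeping.

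In part (ii) the structure matches the paper, but there is a real soft spot. You describe the plan as ``apply part (i) twice,'' which is imprecise: one of the two steps you use is part (i) (moving $\int^a$ past $F$), but the other -- moving $\int_b$ past a \emph{left exact} $F$ -- is not part (i); part (i) gives the end version only for \emph{right exact} functors. More seriously, you then call the step that $F$ preserves $\int_b b\boti\cc b$ ``elementary,'' justified by the parenthetical about $-\boti-$ being ``formed slot by slot.'' That justification does not hold up: the end $\int_b b\boti\cc b$ is not a finite limit, so left exactness of $F$ in the usual sense (preservation of finite limits) does not suffice. What is actually needed is a genuine result specific to the finite linear setting, namely \Cite{Lemma\,2.6}{fScS2}, which says that a left exact functor preserves any end of the form $\int_a H(\cc a,a)$ that exists; the paper cites exactly this. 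Dually, the right exact variant you invoke needs the dual of that lemma. So replace the appeal to an ``elementary observation'' by a citation of this lemma (and its dual); with that substitution your argument coincides with the paper's proof of part (ii).
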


\begin{proof}
(i)\, By combining the Yoneda isomorphism \eqref{eq:Yoneda}, the Fubini theorem for coends 
and the isomorphism \eqref{eq:fScS2Prop3.4} we obtain 
  \be
  \begin{array}{rl} \displaystyle
  \int^{a \in \cala}\!\!\! F(a \boti \cc{a} \boti k) \!\! & \displaystyle
  \cong \int^{a\in\cala}\!\!\!\! \int^{y \in \cala\boxtimes\calaopp\boxtimes\calk}\!\!
  \Hom(y,a \boti \cc{a} \boti k) \otimes F(y)
  \Nxl1 & \displaystyle
  \cong \int^{y \in \cala\boxtimes\CC{\cala}\boxtimes\calk}\!\!\!\! \int^{a\in\cala}\!\!
  \Hom(y,a \boti \cc{a} \boti k) \otimes F(y)
  \Nxl1 & 
  \cong {\displaystyle\int^{y \in \cala\boxtimes\calaopp\boxtimes\calk}}\!\!
  \Hom(y,\int^{a\in\cala}\! a \boti \cc{a} \boti k) \otimes F(y)
  \Nxl2 & 
  \cong\, F(\int^{a\in\cala}\! a \boti \cc{a} \boti k) \,. 
  \eear
  \ee
The  case of a right exact functor is treated dually. 
 \\[2pt]
(ii)\, If $F$ is left exact, then it commutes with ends in the following sense
\Cite{Lemma\,2.6}{fScS2}: for any
$H\colon \calaopp\,{\times}\,\cala \To \cala \boti \calaopp \boti \calb \boti \calbopp \boti \calk$
such that the end $\int_{a\in\cala}\! H(\cc a,a)$ exists, there is a canonical isomorphism
$F \big(\int_{a\in\cala}\! H(\cc a,a) \big) \,{\cong} \int_{a\in\cala} F(H(\cc a,a))$.
Hence by invoking (i) we obtain 
  \be
  \begin{array}{rl} \displaystyle
  \int^{a\in\cala}\!\!\!\! \int_{\!b\in\calb}\! F(a \boti \cc{a} \boti b \boti \cc{b} \boti k) \!\!&
  \cong\, F\big( (\int^{a\in\cala}\! a \boti \cc{a}) \,{\boxtimes} \int_{b\in\calb}(b \boti \cc{b})
  \boti k\big)
  \nxl1 & \displaystyle
  \cong \int_{\!b\in\calb}\! \int^{a\in\cala}\!\!\! F(a \boti \cc{a} \boti b \boti \cc{b} \boti k) 
  \eear
  \ee
for any $k \,{\in}\, \calk$.
Similarly, if the functor $F$ is right exact, it commutes with coends, and in the 
present situation, by (i), with ends as well; thus  the argument works analogously.
\end{proof}

Since the triangle \eqref{eq:NoM-Lex-Rex} relates left and right exact
functors, and since the left adjoint of a left exact functor is right exact and vice versa,
the Eilenberg-Watts functors are particularly useful in combination with adjoints of functors.
For instance, as seen in Lemma 3.8 of \cite{fScS2}, for $\cala$ and $\calb$ finite 
linear categories and $F \,{\in}\, \Funle(\cala,\calb)$ we have an isomorphism
  \be
  \Psile(F) \equiv \int^{a\in\cala}\!\! \cc a \boti F(a)
  \,\cong\, \int^{b\in\calb}\! \cc{F^\la(b)} \boti b
  \label{eq:lem:fScS2:3.8}
  \ee
of coends, as well as a similar isomorphism of ends for right exact functors.
Below we will need the following modest generalization of this result:

\begin{Lemma} \label{lem:3.8plus}
Let $\cala$ and $\calb$ be finite linear categories. For any two left exact functors
$F_1\colon \cala\,{\to}\,\calb$ and $F_2\colon \calb\,{\to}\,\cala$ there is an isomorphism
  \be
  \int^{b\in\calb}\! \cc{F_1^{\la}(b)} \boti F_2(b)
  \,\cong \int^{a\in\cala}\!\! \cc{a} \boti F_2{\circ}F_1(a)
  \ee
of coends. In the case of right exact functors there is a similar isomorphism of ends.
\end{Lemma}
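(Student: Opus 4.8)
The plan is to prove the coend version directly, using only the co-Yoneda lemma \eqref{eq:Yoneda}, the adjunction $F_1^{\la}\dashv F_1$ (which exists because $F_1$ is left exact), the right-exactness of the Deligne product $\boti$ in each of its arguments, and the Fubini theorem for coends; the statement for right exact functors then follows by the dual argument with ends.

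First I would replace the object $\cc{F_1^{\la}(b)}\in\calaopp$ by its co-Yoneda presentation inside $\calaopp$ and convert it along the adjunction by means of the natural isomorphism $\Hom_\cala(F_1^{\la}(b),a)\cong\Hom_\calb(b,F_1(a))$, which gives
\[
  \cc{F_1^{\la}(b)}\,\cong\,\int^{a\in\cala}\!\Hom_\calb(b,F_1(a))\otimes\cc a\,.
\]
Inserting this into the left-hand coend, and using that $(-)\boti F_2(b)$ is right exact -- so that it commutes with the coend over $\cala$ and with tensoring by the vector space $\Hom_\calb(b,F_1(a))$ -- one obtains
\[
  \int^{b\in\calb}\!\cc{F_1^{\la}(b)}\boti F_2(b)\,\cong\,
  \int^{b\in\calb}\!\int^{a\in\cala}\!\Hom_\calb(b,F_1(a))\otimes\big(\cc a\boti F_2(b)\big)\,.
\]

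Next I would apply the Fubini theorem to interchange the two coends and pull the right exact functor $\cc a\boti(-)$ out of the coend over $\calb$, obtaining
\[
  \cong\,\int^{a\in\cala}\!\cc a\boti\Big(\int^{b\in\calb}\!\Hom_\calb(b,F_1(a))\otimes F_2(b)\Big)
  \,\cong\,\int^{a\in\cala}\!\cc a\boti F_2{\circ}F_1(a)\,,
\]
where in the final step the inner coend is evaluated by the co-Yoneda isomorphism \eqref{eq:Yoneda} for the linear functor $F_2$, namely $\int^{b\in\calb}\Hom_\calb(b,x)\otimes F_2(b)\cong F_2(x)$ with $x\eq F_1(a)$. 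All the isomorphisms involved are natural, so this proves the claim. (Alternatively, since $F_2{\circ}F_1$ is again left exact, one may first rewrite the right-hand side by \eqref{eq:lem:fScS2:3.8} together with $(F_2{\circ}F_1)^{\la}\eq F_1^{\la}{\circ}F_2^{\la}$, and then identify the two sides by the analogous change of variables applied to the adjunction $F_2^{\la}\dashv F_2$; this amounts to the same computation.)

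I do not anticipate a genuine obstacle. The point requiring the most attention is that $F_2$ is left, not right, exact, so one cannot simply push $F_2$ through the coend over $\calb$ in the second tensor factor; the computation above circumvents this by first resolving $\cc{F_1^{\la}(b)}$ into a coend of the objects $\cc a$, after which one only ever applies the Deligne product to a \emph{fixed} object, where it is right exact and hence commutes with coends. Apart from that, the only care needed is the usual bookkeeping of variances -- in particular reading the co-Yoneda presentation of $\cc{F_1^{\la}(b)}$ correctly inside the opposite category $\calaopp$ and keeping in mind that it depends contravariantly on $b$.
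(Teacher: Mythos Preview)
Your proof is correct. The paper states this lemma without proof, merely noting that it is a ``modest generalization'' of the isomorphism \eqref{eq:lem:fScS2:3.8} from \cite{fScS2}; your argument via co-Yoneda, the adjunction $F_1^{\la}\dashv F_1$, right exactness of $\boti$ in each slot, and Fubini is precisely the expected direct proof along those lines. The care you take in noting that $F_2$ need not be pulled through the $b$-coend (only the right exact functors $(-)\boti F_2(b)$ and $\cc a\boti(-)$ are) is the one genuinely subtle point, and you handle it correctly.
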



\section{Balanced functors and twistings} \label{sec:BalF}

We are now interested in the situation that the finite categories involved in the 
Eilenberg-Watts equivalences of Lemma \ref{Lemma:EW-adjun} come equipped  with actions 
of finite tensor categories. The resulting module versions of the Eilenberg-Watts 
equivalences involve module functors, balanced functors and (twisted) centers. In the 
present section we recall these and related pertinent notions. We formulate the categories 
that will appear in the equivalences in the convenient language of monads and comonads.
          
Let thus now $\cala$ and $\calb$ be finite tensor categories, and let
$\calm\,{\equiv}\,(\calm,\act,\ract)$ be an $\cala$-$\calb$-bi\-module category. We do not
require the monoidal categories $\cala$ and $\calb$ to be endowed with a pivotal structure. 
As a consequence, by acting with powers of the monoidal functor of taking biduals we get 
generically different bimodules structures from a given one. To handle these different 
bimodules, we introduce the notation
  \be
  a^{[\kappa]} := a^{\vee...\vee}
  \ee
for the $\kappa$-fold right dual of $a\,{\in}\,\cala$,
and ${}^{[\kappa]}a \,{:=}\, {}^{\vee...\vee\!}a$ for the $\kappa$-fold left dual.
(Via the identification $a^{[-\kappa]} \,{=}\, {}^{[\kappa]}a$ 
these notations make sense also for negative values of $\kappa$.)
Then for any pair $(\kappa,\kappa') \,{\in}\, 2\Z \,{\times}\, 2\Z$ we denote by
${}_{}^{\kappa\!\!}\calm{}^{\kappa'}_{}$ the $\cala$-$\calb$-bimodule bimodule for which the
left and right actions on $\calm$ are twisted by the $\kappa$-fold left and $\kappa'$-fold
right dual, respectively, that is, $\cala$ acts as $m \,{\mapsto}\, {}^{[\kappa]}a \Act m$
and $\calb$ as $m \,{\mapsto}\, m \Ract b^{[\kappa']}$.
Thus in particular ${}_{}^{0\!\!}\calm{}^{0} \eq \calm$;
we also abbreviate ${}_{}^{0\!\!}\calm{}^{\kappa} \,{\equiv}\, \calm{}^{\kappa}$
and ${}_{}^{\kappa\!\!}\calm{}^{0} \,{\equiv}\, {}_{}^{\kappa\!\!}\calm$.
In the same vein we endow, for any pair $(\kappa,\kappa') \,{\in}\, 2\Z{+}1 \,{\times}\,
2\Z{+}1$, the opposite category $\calmopp$ with the structure of a $\calb$-$\cala$-bimodule
by defining the actions of $a\,{\in}\,\cala$ and $b\,{\in}\,\calb$ to be
  \be
  \cc{m} \Ract a := \cc{ a^{[\kappa']} \Act m}
  \qquad\text{and}\qquad  b \Act \cc m := \cc{m\Ract {}^{[\kappa]}b} \,,
  \ee
respectively. We denote the so obtained bimodule categories by
${}_{}^{\kappa\!}\CC\calm{}^{\kappa'}_{}$. By omitting one of the actions,
analogous definitions apply to left and right module categories. In the common situation
that we want to pass from a left module $\caln$ to a right module with underlying 
category being the opposite $\cc\caln$, we also use the special notation
  \be
  {}_{}^{}\CC\caln{}^{1}_{} =: \caln\opm \qquad \text{and} \qquad 
  {}_{}^{}\CC\caln{}^{-1}_{} =: \opmmm\caln \,.
  \label{eq:opm-opmmm}
  \ee
          
In the context of (bi)module categories, it is natural to consider two types of extra
structure on a functor. The first type is given by the notion of a module functor:
For $\calm$ and $\caln$ left $\cala$-modules,
the category of \emph{module functors} from $\caln$ to $\calm$, denoted by
$\Fun_{\cala}(\caln,\calm)$, has as objects functors $F\colon \caln \To \calm$ that come 
with a family of natural  isomorphisms $F(a \Act n) \,{\cong}\, a \Act F(n)$
for $a \,{\in}\, \cala$ and $n \,{\in}\, \caln$ that is coherent with respect to 
the tensor product of $\cala$ \Cite{Def.\,7.2.1}{EGno}. 
In our context we are particularly interested in the finite subcategories of
left and right exact module functors, which we denote by $\Funle_{\cala}(\caln,\calm)$
and $\Funre_{\cala}(\caln,\calm)$, respectively.

The second type of functors with extra structure are balanced functors:

\begin{Definition}\label{def:enoM.3.1} \Cite{Def.\,3.1}{enoM} 
\\[2pt]
(i) For a right module $\caln \,{=}\, (\caln,\ract)$ 
and a left module $\calm \,{=}\, (\calm,\act)$ over a finite tensor category $\cala$, 
a \emph{balanced functor} from $\caln \boti \calm$ to a finite category $\calx$ is a linear 
functor $F\colon \caln \boti \calm \To \calx$ together with a natural family of isomorphisms
  \be
  F(n\Ract a \boti m) \xrightarrow{~\cong~} F(n \boti a \Act m)
  \ee
for $a\,{\in}\,\cala$, $m\,{\in}\,\calm$ and $n\,{\in}\,\caln$. 
\\[2pt]
(ii) The category $\Funbal(\caln \boti \calm, \calx)$ of balanced functors has
as objects the balanced functors from $\caln \boti \calm$ to $\calx$ and as morphisms
those natural transformations between balanced functors which are compatible with the
balancings.
\end{Definition}

Again there are left and right exact versions. A natural generalization is to consider
balanced functors from an arbitrary $\cala$-bimodule category $\calm$ to $\calx$. Still more 
generally, we set:

\begin{Definition} \label{def:twistedbalanced}
Let $\calm$ be a bimodule over a finite tensor category $\cala$, and $\calx$ a
finite category. For $\kappa \,{\in}\, 2\Z$, the category of \emph{$\kappa$-twisted 
balanced functors} from $\calm$ to $\calx$, denoted $\Fun^{\kappa}(\calm, \calx)$, 
has as objects functors $F$ that come with coherent isomorphisms 
  \be
  F(a\Act m) \xrightarrow{~\cong~} F(m \Ract a^{[\kappa]})
  \label{eq:twbalanced}
  \ee
for $a \,{\in}\, \cala$.
\end{Definition}

The module Eilenberg-Watts calculus to be developed below relates module functors
and balanced functors with (twisted) centers. We first recall
          
\begin{Definition}\label{def:balancing-etc} \Cite{Def.\,3.2,3.4}{fScS4}
Let $\cala$ be a finite tensor category and $\calm$ a finite $\cala$-bi\-module,
and let $m\,{\in}\,\calm$. 
\\[2pt]
{\rm (i)}
A \emph{balancing} on $m$ is a natural family
$(\sigma \eq (\sigma_a\colon a{\act}m\To m{\ract}a)_{a\in \cala})$ of isomorphisms 
in $\calm$ that is coherent with respect to the tensor product of $\cala$, i.e.\ 
(taking the action of the monoidal unit to be strict) satisfies $\sigma_1 \eq \id_m$ and
  \be
  \sigma_{a\otimes a'} = (\sigma_a \Ract a') \circ (a \Act \sigma_{a'}) : \quad
  (a{\otimes}a') \Act m \to m \Ract (a{\otimes}a')
  \ee
for all $a,a' \,{\in}\, \cala$ (here the constraint morphisms of the bimodule $\calm$ 
are suppressed).
\\[2pt]
{\rm (ii)}
For $\kappa \,{\in}\, 2\Z$, a $\kappa$-\emph{twisted balancing} on $m$ is a natural family
of isomorphisms 
\be
\label{eq:kappa-twist-bal}
  {}^{[\kappa-2]}a \Act m \xrightarrow{~\cong~} m \Ract a
  \ee
in $\calm$, for $a\,{\in}\,\cala$, that is coherent with respect to the tensor product of $\cala$.
\\[2pt]
{\rm (iii)}
The $\kappa$-\emph{twisted center} of $\calm$ is the category of objects of $\calm$
endowed with $\kappa$-twisted balancings. We denote this category by
$\cent_{}^\kappa(\calm) \,{\equiv}\, \cent_\Cala^\kappa(\calm)$
(in \cite{fScS4} this category is denoted by $\calm \,{\stackrel\kappa\boxtimes}\,$).
\end{Definition}

The convention of having a shift by $-2$ in Equation \eqref{eq:kappa-twist-bal} is chosen
in such a way that the $0$-twisted center is universal for balanced left exact functors,
see Proposition \ref{proposition:rel-Del} below, and that $\kappa$-balanced left exact
functors correspond to functors from the $\kappa$-twisted center, as will be seen in Theorem
\ref{thm:EW:coendzFz}. That our conventions favor left exact over right exact functors can
be traced back to the more common use of $\Hom$-spaces as compared to dual $\Hom$-spaces. Also
note that it is sufficient to define balancings just as a family of morphisms; owing to the 
dualities of $\cala$ these morphisms are automatically invertible \Cite{Lemma\,3.3}{fScS4}.
Balanced functors and twisted centers are related via the relative Deligne product
\cite{enoM}, which comes in two variants.

\begin{Definition} \label{definition:relDel}
Let $\calm$ and $\caln$ be a right and left $\cala$-module, respectively. The 
\emph{right exact relative Deligne product $\calm \botAre \caln $} is the finite category
characterized by the following universal property: It is equipped with a balanced 
right exact functor $B_{\calm,\caln}\colon \calm \boti \caln \To \calm \botAre \caln$,
such that for every finite category $\calx$, pre-composition with $B_{\calm,\caln}$ 
furnishes an adjoint equivalence
  \be
  \Funrebal(\calm \boti \caln, \calx) \simeq \Funre(\calm \botAre \caln, \calx) \,.       
  \ee
Analogously, the \emph{left exact relative Deligne product} $\calm \botAle \caln$ 
is universal for left exact balanced functors. 
\end{Definition}
 
In \Cite{Prop.\,3.8}{enoM} the right exact relative Deligne product is shown
to be related to the twisted center.\,%
 \footnote{~The twist is omitted in \cite{enoM}, see \cite{fScS}.}

\begin{Proposition} \label{proposition:rel-Del}
The $0$-twisted center $\cent_\Cala^0(\calm \Boti \caln)$ has a natural structure 
of a left exact relative Deligne product of $\calm$ and $\caln$. The $4$-twisted center
$\cent_\Cala^4(\calm \Boti \caln)$ has a natural structure of a right exact relative Deligne
product of $\calm$ and $\caln$. The universal balanced functors are provided by the left 
adjoint, respectively right adjoint, of the forgetful functor from $\cent_\Cala^0(\calm 
\Boti \caln)$ and from $\cent_\Cala^4(\calm \Boti \caln)$ to $\calm \Boti \caln$, respectively.
\end{Proposition}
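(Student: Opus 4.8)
The plan is to identify the $0$-twisted center $\cent_\Cala^0(\calm \Boti \caln)$, equipped with the appropriate universal balanced functor, with the left exact relative Deligne product, and dually for the $4$-twisted center. First I would recall that an object of $\cent_\Cala^0(\calm \Boti \caln)$ is an object $X \,{\in}\, \calm \Boti \caln$ together with a coherent family of isomorphisms ${}^{[-2]}a \Act X \xrightarrow{\cong} X \Ract a$, where here the left action of $a$ on $\calm \Boti \caln$ is $a \Act (m \boti n) \,{=}\, (m \Ract a) \boti n$ (using the right $\cala$-action on $\calm$) and the right action is $(m \boti n) \Ract a \,{=}\, m \boti (a \Act n)$; in other words, such data is precisely a balancing in the sense of Definition \ref{def:enoM.3.1}(i) for the image $m \boti n$, once one tracks through the bidual twist built into Definition \ref{def:balancing-etc}(ii). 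I would then observe that the forgetful functor $U\colon \cent_\Cala^0(\calm \Boti \caln) \To \calm \Boti \caln$ is exact, and being a functor between finite categories it admits both a left adjoint $U^{\la}$ and a right adjoint $U^{\ra}$. The candidate universal balanced functor is $B \,{:=}\, U^{\la} \circ (\boti) \colon \calm \boti \caln \To \cent_\Cala^0(\calm \Boti \caln)$, which is right exact since $U^{\la}$ is a left adjoint; but we want a \emph{left exact} balanced functor, so the correct statement uses the structure constructed so that the universal property matches $\Funle$. Here I should be careful: the universal balanced functor for the \emph{left exact} relative Deligne product need not itself be left exact, and indeed in Definition \ref{definition:relDel} it is required only to induce the adjoint equivalence $\Funlebal(\calm \boti \caln, \calx) \simeq \Funle(\calm \botAle \caln, \calx)$ by precomposition; it is the induced functors on functor categories that must be exact. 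So the real content is verifying this universal property.

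The key step is thus to establish the adjoint equivalence
  \be
  \Funlebal(\calm \boti \caln, \calx) \,\simeq\, \Funle\big(\cent_\Cala^0(\calm \Boti \caln), \calx\big)
  \ee
for every finite category $\calx$, with the equivalence given by precomposition with $B \,{=}\, U^{\la} \circ (\boti)$. I would proceed by unpacking both sides. Given a left exact functor $G\colon \cent_\Cala^0(\calm \Boti \caln) \To \calx$, precomposition yields $G \circ B$, which inherits a balancing from the balancing structure encoded in $\cent_\Cala^0$; left exactness of $G \circ B$ is not claimed and not needed — only that it is a balanced linear functor, but actually in $\Funlebal$ we do want it left exact, so I must check that $B$ itself, while built from a left adjoint, still has the property that $G \circ B$ is left exact whenever $G$ is; this is the subtle point and is where the specific choice of twist $\kappa \,{=}\, 0$ enters. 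Conversely, given a left exact balanced functor $F\colon \calm \boti \caln \To \calx$, I would use the Deligne-product universal property to factor $F$ through $\calm \Boti \caln$, giving $\tilde F\colon \calm \Boti \caln \To \calx$ left exact, and then show that the balancing on $F$ equips $\tilde F$ with a canonical extension along the exact forgetful functor $U$ to a left exact functor $\hat F\colon \cent_\Cala^0(\calm \Boti \caln) \To \calx$ satisfying $\hat F \circ B \,{\cong}\, F$; concretely, $\hat F$ is obtained by a (co)end/(co)monadic argument, writing $\cent_\Cala^0(\calm \Boti \caln)$ as the category of comodules over the comonad $U^{\ra} U$ (or equivalently modules over the monad $U^{\la} U$) on $\calm \Boti \caln$, and using that a balancing on $F$ is exactly the data making $\tilde F$ into a morphism of (co)monad-algebras. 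The two constructions are then checked to be mutually inverse up to natural isomorphism, and the naturality in $\calx$ together with finiteness upgrades the equivalence to an adjoint equivalence.

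For the $4$-twisted center, the argument is formally dual: one replaces left exact by right exact, $U^{\la}$ by $U^{\ra}$, the comonad by a monad, and $\Hom$-based (co)ends by dual-$\Hom$-based ones; the shift from $\kappa \,{=}\, 0$ to $\kappa \,{=}\, 4$ reflects exactly the discrepancy between the left-exact and right-exact Eilenberg-Watts equivalences, i.e.\ the appearance of the (square of the) bidual functor — compare the $(-)^*$ in \eqref{Phire...Psire} versus the bare $\Hom$ in \eqref{Phile.Psile}. I expect the main obstacle to be precisely the verification that precomposition with the universal balanced functor $B$ sends left exact functors on the twisted center to left exact balanced functors on $\calm \boti \caln$ and, conversely, that a left exact balanced functor extends to a \emph{left exact} functor on the twisted center — in other words, controlling exactness through the (co)monadic adjunction. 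This is where one must use that the forgetful functor $U$ is exact (so its adjoints are exact on the respective side) together with the fact that a balancing is an \emph{isomorphism}, which by \Cite{Lemma\,3.3}{fScS4} is automatic and is what makes the comparison functor an equivalence rather than merely a pair of adjoints; tracking the bidual twists carefully so that the bookkeeping lands on $\kappa \,{=}\, 0$ and $\kappa \,{=}\, 4$ respectively is the other place where care is required.
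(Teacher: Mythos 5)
Your plan correctly identifies the structural ingredients — objects of the twisted center as objects with a twisted balancing, the exact forgetful functor $U$ and its adjoints, the (co)monadic picture $\cent_\Cala^\kappa \simeq \Comod_{U^{\ra}U}$ — and these are indeed aligned in spirit with the machinery the paper develops in later sections. But the proposal is a roadmap, not a proof: at every point where the argument could fail you say "this is where care is required" or "tracking the bidual twists carefully" and then move on. In particular, the one concrete calculation that the paper's proof actually performs, and which determines the twist values $0$ and $4$, is absent. The paper shows directly that for a balanced right exact $F$ with right adjoint $F^{\ra}$, one has a chain of natural isomorphisms
\be
  \Hom(m \boti n, a \act F^{\ra}(x)) \cong \Hom(m \boti a^{\vee} \Act n, F^\ra(x))
  \cong \cdots \cong \Hom(m \boti n, F^{\ra}(x) \ract a^{\vee \vee}) \,,
\ee
whose last step produces the double dual $a^{\vee\vee}$ and hence a $\cent_\Cala^4$-balancing on $F^{\ra}(x)$; dually $F^{\la}(x)$ acquires a $\cent_\Cala^0$-balancing. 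Without carrying out this Hom-chain you never actually distinguish $\kappa = 0$ from $\kappa = 4$, which is the entire content of the statement. The paper then delegates the verification of the universal property and the identification of the universal balanced functor to \Cite{Prop.\,3.8}{enoM} and \Cite{Prop.\,2.18}{fScS}; your plan makes no contact with those references and instead gestures at a mutual-inverse check it does not perform.

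A second issue: the parenthetical claim that "the universal balanced functor for the left exact relative Deligne product need not itself be left exact" is used to wave away a real tension rather than resolve it. Definition \ref{definition:relDel} for the right exact case does require $B_{\calm,\caln}$ to be a balanced \emph{right exact} functor, and "analogously" for the left exact case would impose left exactness. Noticing that the adjoint of the forgetful functor does not obviously have the expected exactness is a sharp observation, but reinterpreting the definition to dissolve the problem is not a legitimate step; the resolution of this point is precisely what is addressed in \Cite{Prop.\,2.18}{fScS}, and it needs an argument, not a re-reading. Your final paragraph's analysis — that the $\kappa = 0 \leftrightarrow \kappa = 4$ shift reflects the $(-)^*$ appearing in \eqref{Phire...Psire} versus plain $\Hom$ in \eqref{Phile.Psile} — is a reasonable heuristic, but again it is stated as a heuristic rather than derived.
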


\begin{proof}
We only recall the main idea of the proof. Let $F\colon \calm \boti \caln \To \calx$ 
be balanced right exact. Then it admits a right adjoint $F^{\ra}$, and
for all $x \,{\in}\, \calx$, the object $F^{\ra}(x)$ in $\calm \boti \caln$
naturally has the structure of an object in $\cent_\Cala^4(\calm \boti \caln)$. Indeed, 
for all $m \boti n \,{\in}\, \calm \boti \caln$ there are natural isomorphisms
  \be
  \begin{array}{ll}
  \Hom(m \boti n, a \act F^{\ra}(x)) \!\!& \cong \Hom(m \boti a^{\vee} \Act n, F^\ra(x)) 
  \Nxl3 &
  \cong \Hom(F(m \boti a^{\vee} \Act n), x)
  \Nxl1 &
  \!\! \stackrel{\eqref{eq:twbalanced}}\cong \! \Hom(F(m \Ract a^{\vee} \boti n), x)
  \Nxl3 &
  \cong \Hom( m \Ract a^{\vee} \boti n, F^\ra(x))
  \Nxl3 &
  \cong \Hom(m \boti n, F^{\ra}(x) \ract a^{\vee \vee}) \,.
  \end{array}
  \ee
These isomorphisms endow $F^\ra(x)$ with the structure of an object in
$\cent_\Cala^4(\calm \Boti \caln)$ and give one part of the adjoint equivalence 
from Definition \ref{definition:relDel}.
\\
Analogously, if $F$ is left exact and balanced, we obtain objects
$F^{\la}(x) \,{\in}\, \cent_\Cala^0(\calm \Boti \caln)$. The universal property 
follows in the same way as in the proof of \Cite{Prop.{} 3.8}{enoM},
and the characterization of the universal balanced functors as in \Cite{Prop.{} 2.18}{fScS}.
\end{proof}

\begin{Remark}\label{Remark:shift-4}
For any $\cala$-bimodule $\calm$ there is an equivalence 
  \be
  F_\calm :\quad \cent_\Cala^0(\calm) \xrightarrow{\,\simeq}\, \cent_\Cala^4(\calm)
  \ee
of linear categories which makes use of the distinguished invertible object 
$D \,{\in}\, \cala$. Recall from \cite{etno2} that the object $D$ comes with coherent 
natural isomorphisms $D \oti {}^{\vee\vee\!}a \,{\cong}\, a^{\vee \vee} \oti D$ for 
all $a \,{\in}\, \cala$. It follows that by setting $F_\calm(m) \,{:=} D \act m$ we 
get an equivalence between the two twisted centers as finite categories. 
\end{Remark}

We now show that the categories of module functors, balanced functors, and centers
can all be expressed as (co)modules over suitable (co)monads. Recall \Cite{Ch.\,9}{TUvi}
that the \emph{central comonad} of a finite tensor category $\cala$ is the endofunctor
$Z\colon \cala\To\cala$ that maps objects as
  \be
  Z: \quad b ~\longmapsto \int_{\!a\in \cala}\! a\, {\otimes}\, b \,{\otimes}\, a^\vee ,
  \label{eq:defZ}
  \ee
and that the category of comodules over this comonad is equivalent to the center 
$\calz(\cala)$ of $\cala$.
We consider variants of the central comonad that are twisted by powers of the bidual.
Let $\calm$ be a bimodule category over finite tensor category $\cala$. There is then
a comonad analogous to \eqref{eq:defZ} in which the tensor products in the end 
\eqref{eq:defZ} are replaced by the left and right actions of $\cala$ on $\calm$; for any
$\kappa \,{\in}\, 2\Z$ this can be generalized \cite{fScS4} to a comonad 
  \be
  Z_{[\kappa]}:\quad  m ~\longmapsto \int_{\!a\in\cala}\! a \act m \ract a^{[\kappa-1]}
  \label{eq:def:comonadZn}
  \ee
on $\calm$. 

Let now $\calm$ be an $\cala$-bimodule, and $\calk$ and $\caln$ left $\cala$ modules.
Then the category $\Fun(\calk,\caln)$ becomes an $\cala$-bimodule by setting
  \be
  a_{1} \act F \ract a_{2}(k) :=\, a_{1} \act F(a_{2} \act k) 
  \label{eq:comonad-Fun}
  \ee
for $a_1,a_2 \,{\in}\, \cala$. Likewise, for any finite category $\calx$, $\Fun(\calm, \calx)$
is an $\cala$-bimodule, with 
  \be
  (a_{1} \act G \ract a_{2})(m) := G(a_{2} \act m \ract a_{1}) \,.
  \label{eq:comonad-Fun-bi}
  \ee
We then obtain the following categories of comodules over twisted central comonads on these 
$\cala$-bimodules:

\begin{Lemma}\label{lem:comodules} Let $\kappa \,{\in}\, 2\Z$.
\\[3pt]
(i) The category of comodules over $Z_{[\kappa]}$ on an $\cala$-bimodule $\calm$ 
is equivalent to the $\kappa$-twisted center $\calz^{\kappa}(\calm)$.
\\[3pt]
(ii) For $\calk$ and $\caln$ left $\cala$-modules, the category of comodules over 
$Z_{[\kappa]}$ on the functor category $\Fun(\calk,\caln)$ is equivalent to the
category $\Fun_{\cala}({}_{\cala}^{}\calk, {}_{}^{\kappa-2}\!\!\!\!{}_{\cala}^{}\caln)$   
of module functors.
\\[3pt]
(iii) For $\calm$ an $\cala$-bimodule, the category of comodules over $Z_{[\kappa]}$ on 
$\Fun(\calm,\calx)$ is the category of twisted balanced functors $\Fun^{2-\kappa}(\calm,\calx)$.
\end{Lemma}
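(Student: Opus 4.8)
The plan is to prove all three parts by the same mechanism: a comodule over a comonad of the form $Z_{[\kappa]}$ on a category $\calc$ is an object $c$ together with a coaction $\rho\colon c\To Z_{[\kappa]}(c)$ satisfying the counit and coassociativity axioms, and by the universal property of the end $Z_{[\kappa]}(c)=\int_{a}a\act c\ract a^{[\kappa-1]}$ such a coaction is the same datum as a dinatural family of morphisms $c\To a\act c\ract a^{[\kappa-1]}$, one for each $a\,{\in}\,\cala$. Using the dualities of $\cala$ (as invoked already in Definition~\ref{def:balancing-etc} and in the proof of Proposition~\ref{proposition:rel-Del}), such a family is equivalent to a family of morphisms that, after adjunction, becomes precisely the defining data of the relevant twisted balancing or module structure; the counit axiom translates into the normalization $\sigma_1=\id$, and coassociativity translates into the tensor-product coherence. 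So the proof is essentially a dictionary between "comodule over $Z_{[\kappa]}$" and "object with $\kappa$-twisted balancing", checked in three slightly different ambient categories.

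First I would do part~(i) carefully, since it is the prototype. Spelling out the counit $\varepsilon\colon Z_{[\kappa]}\To\Id$ (projection to the $a\eq\one$ component) and the comultiplication of $Z_{[\kappa]}$ (built from the double-end description $a_1\act(a_2\act m\ract a_2^{[\kappa-1]})\ract a_1^{[\kappa-1]}$ and Fubini), I would show that a coaction $\rho\colon m\To\int_a a\act m\ract a^{[\kappa-1]}$ is, componentwise, a dinatural family $\rho_a\colon m\To a\act m\ract a^{[\kappa-1]}$; transposing across the adjunction $(a^{[\kappa-2]}\act-)\dashv(a^{[\kappa-2]\vee}\act-)$ — equivalently using $a^{[\kappa-1]}\eq (a^{[\kappa-2]})^{\vee}$ and the right-duality adjunction on the $\calb$-side replaced here by the right $\cala$-action — converts $\rho_a$ into a morphism ${}^{[\kappa-2]}a\act m\To m\ract a$, which is exactly the datum of a $\kappa$-twisted balancing in the sense of \eqref{eq:kappa-twist-bal}. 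I would then match the counit axiom with $\sigma_1=\id_m$ and the coassociativity axiom with the coherence relation $\sigma_{a\otimes a'}=(\sigma_a\ract a')\circ(a\act\sigma_{a'})$, and check that morphisms of comodules are exactly morphisms in $\cent^{\kappa}(\calm)$ (i.e.\ those compatible with the twisted balancings). By \Cite{Lemma\,3.3}{fScS4} invertibility is automatic, so this yields the claimed equivalence of categories.

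For part~(ii) and part~(iii) the argument is the same once the bimodule structures \eqref{eq:comonad-Fun} and \eqref{eq:comonad-Fun-bi} are unwound. For (iii): a coaction of $Z_{[\kappa]}$ on $G\,{\in}\,\Fun(\calm,\calx)$ is a dinatural family $G\To a\act G\ract a^{[\kappa-1]}$ in $\Fun(\calm,\calx)$, which by \eqref{eq:comonad-Fun-bi} means a natural family $G(m)\To G(a^{[\kappa-1]}\act m\ract a)$; rewriting via the left action twisted appropriately and using duality to replace $a^{[\kappa-1]}$ by $a$ gives isomorphisms $G(a\act m)\xrightarrow{\cong}G(m\ract a^{[2-\kappa]})$, which is precisely the defining data \eqref{eq:twbalanced} of a $(2-\kappa)$-twisted balanced functor; counit and coassociativity again give the coherence. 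For (ii): by \eqref{eq:comonad-Fun} a coaction gives a natural family $F(k)\To a_1\act F(a_2\act k)$ with $a_1,a_2$ coupled dinaturally as $a_2\eq a_1^{[\kappa-1]}$-type dual of the other, which after the same duality manipulation becomes exactly a module-functor structure $F(a\act k)\cong a\act F(k)$ but for the action on the target twisted by ${}^{[\kappa-2]}(-)$, i.e.\ a module functor ${}_{\cala}\calk\To{}^{\kappa-2}_{\phantom{\kappa}\cala}\caln$; coherence of the comodule matches coherence of the module functor, and this is the claimed equivalence.

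The main obstacle I expect is purely bookkeeping: tracking the powers of biduals and the left-versus-right duality adjunctions so that the shift "$\kappa$" in the comonad lands on "$\kappa$-twisted center" in (i) and on "$2-\kappa$" and "$\kappa-2$" in (iii) and (ii) respectively, consistently with the $-2$ convention fixed after Definition~\ref{def:balancing-etc}. In particular one must be careful that $Z_{[\kappa]}$ uses $a^{[\kappa-1]}\eq(a^{[\kappa-2]})^{\vee}$ on the right and, via the end/adjunction, produces on transposition the left dual ${}^{[\kappa-2]}a$ appearing in \eqref{eq:kappa-twist-bal}; since $\cala$ is not assumed pivotal, $({}^{\vee}a)^{\vee}\eq a$ but ${}^{\vee}(a^{\vee})\eq a$ are the only identifications available and one may not interchange left and right duals freely. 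Everything else — comparing counit/comultiplication of $Z_{[\kappa]}$ with the normalization and coherence axioms of a (twisted) balancing, and comparing comodule morphisms with structure-preserving morphisms — is a direct unwinding of the universal property of ends, and can be stated without extended computation.
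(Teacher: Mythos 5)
Your proposal matches the paper's proof: in all three parts the paper likewise reads off a coaction $c\To Z_{[\kappa]}(c)$ as a coherent family $c\To a\act c\ract a^{[\kappa-1]}$ via the end, transposes by duality/adjunction to land on the defining data of a $\kappa$-twisted balancing, a module-functor constraint, or a $(2-\kappa)$-twisted balancing, and matches counit/coassociativity with normalization/coherence. One small slip worth fixing: the adjunction you invoke in part (i) is written as $(a^{[\kappa-2]}\act-)\dashv(a^{[\kappa-2]\vee}\act-)$, but with the paper's conventions $(b\act-)$ is left adjoint to $(\Vee b\act-)$, so the correct transposition uses $({}^{[\kappa-1]}a\act-)\dashv({}^{[\kappa-2]}a\act-)$ (equivalently, pass $a\act-$ to the other side via its left adjoint $a^\vee\act-$ and then substitute $a\mapsto{}^{[\kappa-1]}a$); your stated conclusion ${}^{[\kappa-2]}a\act m\To m\ract a$ is nevertheless the right one.
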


\begin{proof}
The structure of a comodule over $Z_{[\kappa]}$ on $m$ is equivalent to a 
coherent family of morphisms $m \To a \Act m \Ract a^{[\kappa-1]}$. By adjunction
this is, in turn, equivalent to a coherent family ${}^{[\kappa-2]}a \Act m \To m \Ract a$.
This shows claim (i) (see also \Cite{Cor.\,B.3}{fScS4}).
 \\[2pt]
As for claim (ii), notice that according to the formulas \eqref{eq:def:comonadZn} and 
\eqref{eq:comonad-Fun} the structure of a $Z_{[\kappa]}$-comodule on a functor 
$F \,{\in}\, \Fun(\calk,\caln)$ gives rise to a coherent 
family $F(k) \To a \Act F(a^{[\kappa-1]}\Act k)$. This gives coherent isomorphisms 
$F(a \Act k) \,{\cong}\, {}^{[\kappa-2]}a \act F(k)$, as required for a module functor
from ${}_{\cala}^{}\calk$ to ${}_{}^{\kappa-2}\!\!\!\!{}_{\cala}^{}\caln$.
\\[2pt]
Concerning claim (iii), we note that by \eqref{eq:def:comonadZn}
and \eqref{eq:comonad-Fun-bi} a functor $F \,{\in}\, \Fun(\calm,\calx)$ with comodule 
structure over the comonad $Z_{[\kappa]}$ comes with coherent morphisms 
$F(m) \To F(a^{[\kappa-1]}\Act m \Ract a)$. Using the
evaluation morphism, this provides us with a sequence of morphisms $F(m \Ract a^{\vee}) 
	\,{\to} $\linebreak[0]$
F({a}^{[\kappa-1]} \Act (m \Ract a^{\vee}) \Ract a)
\To F({a}^{[\kappa-1]} \Act m )$. The composite of these morphisms is an isomorphism 
and a balancing, as required. The converse follows analogously.
\end{proof}


\section{Eilenberg-Watts calculus for module functors}\label{sec:EWm}

To proceed we invoke Corollary 4.3 of \cite{fScS2}, which states that for any
$\cala$-$\calb$-bimodule category $\calm \,{=}\, (\calm,\act,\ract)$ over finite tensor
categories there are, coherently with respect to the monoidal structures of $\cala$ 
and $\calb$, natural isomorphisms
  \be
  \bearl \displaystyle
  \int_{m\in\calm}\! \cc m \boxtimes a \Act m \Ract b
  \,\cong \int_{m\in\calm}\!\! \cc{{}^{\vee\!}a \Act m \Ract b^\vee} \boxtimes m
  \qquad\text{and}
  \Nxl3 \displaystyle
  \int^{m\in\calm}\!\! \cc m \boxtimes a \Act m \Ract b
  \,\cong \int^{m\in\calm} \cc{{a^{\vee\!} \Act m \Ract {}^{\vee\!}b}} \boxtimes m
  \label{eq:Action-boxtimes}
  \eear
  \ee
(be aware of the different appearance of left and right duals in the two formulas).
Let now $F$ and $G$ be left and right exact module functors, respectively, between
finite left $\cala$-modules $\caln \,{\equiv}\, {}_\cala\caln$ and 
$\calm \,{\equiv}\, {}_\cala\calm$, with $\cala$ a finite tensor category. We use
the notations $\caln\opm$ and $\opmmm\caln$ introduced in \eqref{eq:opm-opmmm}.
By applying the result \eqref{eq:Action-boxtimes} to the $\cala$-bimodules 
$\opmmm\caln \boti \calm$ and $\caln\opm \boti \calm$, respectively, we get isomorphisms
  \be
  \bearll
  \Psile\NM(F(a\Act{-})) \!\!\!& \displaystyle
  = \int^{n\in\caln}\!\! \cc n \,\boti F(a\Act n)
  \cong \!\int^{n\in\caln}\!\! \cc n \boxtimes a\,{\act} F(n)
  \Nxl1 & \displaystyle
  \!\! \stackrel{\eqref{eq:Action-boxtimes}} \cong \!
  \int^{n\in\caln}\!\! \cc{a^\vee\Act n} \,\boti F(n)
  \,\cong \int^{n\in\caln}\!\!\! n\opm {\Ract}\, a \boti F(n)
  \eear
  \ee
and
  \be
  \bearll
  \Psire\NM(G(a\Act{-})) \!\!\!& \displaystyle
  = \int_{\!n\in\caln}\! \cc n \,\boti G(a\Act n)
  \cong \!\int_{\!n\in\caln}\! \cc n \boxtimes a\,{\act} G(n)
  \Nxl2 & \displaystyle
  \!\! \stackrel{\eqref{eq:Action-boxtimes}} \cong \!
  \int_{\!n\in\caln}\! \cc{\Vee a\Act n} \boxtimes G(n)
  \,\cong \int_{\!n\in\caln}\! \opmm n\Ract a \boxtimes G(n) \,,
  \eear
  \ee
respectively, coherent in $a \,{\in}\, \cala$.
Together with the exactness of the action functor, which allows one to exchange it 
with (co)ends, this implies that the object $\Psile(F) \,{\in}\, \calnopp\boti\calm$ has a 
natural structure of an object in the center $\calz(\caln\opm \boti \calm)$, while
$\Psire(G)$ has a natural structure of an object in $\calz(\opmmm\caln \boti \calm)$.

Conversely, given an object $n\opm \boti m$ in the center 
$\calz(\caln\opm \boti \calm)$, there are coherent balancing isomorphisms
$\cc{a^\vee{\act} n} \boti m \,{\cong}\,\cc n \boti a\Act m$, and hence we have
  \be
  \bearll
  \Hom_\caln(n, a \Act {-}) \oti m \!\!\!& \cong \Hom_\caln(a^\vee{\act} n,-) \oti m
  \Nxl3 &
  = \Phile\NM(\cc{a^\vee{\act} n} \boti m) \cong\, \Phile\NM(\cc n \boti a\Act m) 
  \Nxl2 &
  \phantom{= \Phile\NM(\cc{a^\vee{\act} n} \boti m)}
  = \Hom_\caln(n,-) \oti\, a \Act m\,
  \eear
  \ee
for any $a\,{\in}\,\cala$. Thus the left exact functor $\Hom_\caln(n,-) \oti m$ is 
endowed with the structure of a module functor from ${}_\cala\caln$ to ${}_\cala\calm$.
Analogously one obtains, invoking the definition of $\Phire\NM$, that
  \be
  \Hom_\caln(a \Act {-},n)^* \oti m \,\cong\, \Hom_\caln(-,n)^* \oti \,a \Act m
  \ee
for $\opmm n\boti m \,{\in}\, \calz(\opmmm\caln\boti\calm)$, showing that the right 
exact functor $\Hom_\caln(-,n)^* \oti m$ comes with the structure of
a module functor from ${}_\cala\caln$ to ${}_\cala\calm$, too.

Taken together, we have the following module version of the Eilenberg-Watts equivalences:

\begin{Proposition}\label{prop:theprop}
For finite left modules $\caln$ and $\calm$ over a finite tensor category $\cala$
the equivalences \eqref{Phile.Psile} induce two-sided adjoint equivalences 
  \be
  \begin{tikzcd}[column sep=-2.8ex,row sep=0.8em]
  ~ & \calz(\caln\opm \boti \calm) ~~~~~~ \ar{ddl}[xshift=-2pt]{\Phile} & ~
  & ~~~~~~ \calz(\opmmm\caln \boti \calm) \ar[xshift=-2pt]{ddr}[swap]{\Phire} & ~
  \\
  ~ & ~ & \mbox{$\quad$and$\quad$} & ~ & 
  \\
  \Funle_\Cala(\caln,\calm) \ar[xshift=-12pt]{uur}[xshift=2pt]{\Psile} & ~ & ~ & ~
  & \Funre_\Cala(\caln,\calm) \ar[xshift=12pt]{uul}[swap]{\Psire}
  \end{tikzcd}
  \label{LexZ--RexZ}
  \ee
of finite categories.
\end{Proposition}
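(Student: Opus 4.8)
The plan is to verify that the four functors $\Phile$, $\Psile$, $\Phire$, $\Psire$ from \eqref{Phile.Psile}--\eqref{Phire...Psire} actually land in (and come from) the stated twisted centers, and that the extra module/center structure is respected by the adjunction units and counits, so that the ordinary Eilenberg-Watts adjoint equivalences restrict to adjoint equivalences of the subcategories in question. The main work has in effect already been carried out in the discussion preceding the Proposition; what remains is to package it. First I would record the two directions of the correspondence on objects: (a) for $F \,{\in}\, \Funle_\Cala(\caln,\calm)$, the computation
  \be
  \Psile\NM(F(a\Act{-})) \cong \int^{n\in\caln}\!\! n\opm {\Ract}\, a \boti F(n)
  \ee
together with exactness of the actions (which lets the action functor pass through the coend) shows that $\Psile(F) \,{\in}\, \calnopp \boti \calm$ acquires, from the module structure $F(a\Act n)\cong a\Act F(n)$, a coherent family of balancing isomorphisms, i.e.\ the structure of an object of $\calz(\caln\opm \boti \calm)$; (b) conversely, for $n\opm \boti m \,{\in}\, \calz(\caln\opm \boti \calm)$ the displayed isomorphisms $\Hom_\caln(n,a\Act{-})\oti m \cong \Hom_\caln(n,-)\oti a\Act m$ equip $\Phile\NM(n\opm\boti m)$ with a module-functor structure. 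So on objects the assignments restrict as claimed, and the right-hand ($\Phire$/$\Psire$) case is the same argument read through \eqref{eq:Action-boxtimes} in its coend (resp.\ end) form, landing in $\calz(\opmmm\caln\boti\calm)$.

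Next I would check compatibility with morphisms and with the existing natural isomorphisms $\Phile\circ\Psile \cong \Id$ and $\Psile\circ\Phile \cong \Id$. The point is that a morphism in $\calz(\caln\opm\boti\calm)$ is by definition a morphism in $\calnopp\boti\calm$ intertwining the balancings, and a morphism in $\Funle_\Cala(\caln,\calm)$ is a module natural transformation; one has to see that $\Psile$ sends the latter to the former and $\Phile$ sends the former to the latter. This is automatic because the balancing on $\Psile(F)$ was \emph{constructed} out of the module structure of $F$ by applying the functor $\Psile$ to the module-structure isomorphisms, and naturality of everything in sight is inherited from naturality of the Yoneda/coend isomorphisms \eqref{eq:Yoneda} and of \eqref{eq:Action-boxtimes}; a module natural transformation is precisely one whose image under $\Psile$ respects these. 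Thus $\Psile$ and $\Phile$ are well-defined functors between the restricted categories, and since the underlying ordinary Eilenberg-Watts functors are mutually inverse equivalences, so are the restrictions — one just notes that the unit and counit of that adjoint equivalence, being built from \eqref{eq:Yoneda}, are themselves morphisms of centers / module natural transformations when evaluated on objects carrying the extra structure, because the transported balancing is carried along isomorphically. The same reasoning applies verbatim to $\Phire$, $\Psire$.

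The one genuinely delicate point — and the step I expect to be the main obstacle — is bookkeeping the \emph{twisting}: making sure the dualities appearing in \eqref{eq:Action-boxtimes} match the bimodule twist hidden in the notation $\caln\opm \,{=}\, {}^{}\CC\caln{}^{1}$ versus $\opmmm\caln \,{=}\, {}^{}\CC\caln{}^{-1}$, so that the \emph{untwisted} center $\calz \,{=}\, \calz^0$ is indeed what comes out on the left exact side and on the right exact side, rather than some $\calz^{\pm2}$. Concretely, on the left exact side the $\Psile$-computation produces a balancing on $n\opm\boti m$ of the form $\cc{a^\vee{\act}n}\boti m \,{\cong}\, \cc n\boti a\Act m$; translating $\cc{a^\vee{\act}n}\,{=}\, n\opm\Ract a$ via the definition of the $\calb$-$\cala$-bimodule $\caln\opm$ shows this is exactly a balancing in the sense of Definition \ref{def:balancing-etc}(i) on $\caln\opm\boti\calm$ with no extra shift, i.e.\ an object of $\calz^0_\Cala(\caln\opm\boti\calm)$; on the right exact side the left dual ${}^{\vee\!}a$ in the first line of \eqref{eq:Action-boxtimes} is what forces the choice $\opmmm\caln$ and again produces an honest (untwisted) balancing on $\opmmm\caln\boti\calm$. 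Once this sign/dual bookkeeping is nailed down, assembling the two-sided adjoint equivalence is immediate from the ordinary Eilenberg-Watts calculus of \eqref{eq:NoM-Lex-Rex}, since adjunctions restrict to full subcategories that are matched by the equivalence.
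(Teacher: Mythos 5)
Your argument is essentially the paper's. The Proposition is stated without a separate proof environment, and the discussion preceding it --- the $\Psile$/$\Psire$ computations via \eqref{eq:Action-boxtimes} together with the $\Phile$/$\Phire$ translation of a balancing into a module-functor constraint --- is exactly what the paper offers as justification; your write-up reconstructs that discussion and fills in the routine verification that (co)unit morphisms respect the extra structure.

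There is, however, one slip in the very step you flag as delicate: the identification of the twist index. The balancing that the $\Psile$-computation produces on $\caln\opm\boti\calm$ is indeed $a\Act(\cc n\boti m)\,{\cong}\,(\cc n\boti m)\Ract a$, with no double-dual appearing --- your computation is right. But in the paper's indexing that is a balancing in the sense of Definition~\ref{def:balancing-etc}(i), and by part (ii) of the same Definition a family ${}^{[\kappa-2]}a\Act m \,{\to}\, m\Ract a$ is untwisted precisely when $\kappa\eq 2$, not $\kappa\eq 0$. Consistently, the central comonad \eqref{eq:defZ} whose comodule category is the center $\calz(\cala)$ is $Z_{[2]}$ in the notation of \eqref{eq:def:comonadZn}, so the unadorned $\calz$ in Proposition~\ref{prop:theprop} stands for $\calz^2$. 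By contrast, $\calz^0$ carries a ${}^{\vee\vee}$ shift on the left action and is what arises as the left exact relative Deligne product in Proposition~\ref{proposition:rel-Del}; if $\calz$ meant $\calz^0$, the Proposition would be false as stated. So replace your ``$\calz\eq\calz^0$'' by ``$\calz\eq\calz^2$''; the rest of the reasoning stands.
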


In case the finite tensor category $\cala$ is pivotal, the pivotal structure 
gives rise to a distinguished equivalence between the right $\cala$-modules 
$\caln\opm$ and $\opmmm\caln$, and hence also between the finite categories 
$\calz(\caln\opm \boti \calm)$ and $\calz(\opmmm\caln \boti \calm)$. Thus we have

\begin{cor}\label{cor:thecor}
For finite left modules $\caln$ and $\calm$ over a \emph{pivotal} finite tensor 
category $\cala$ the Eilenberg-Watts equivalences restrict to a commuting triangle
  \be
  \begin{tikzcd}[column sep=-0.6ex,row sep=5.5em]
  ~ & \calz(\caln\opm \boti \calm) \,\simeq\, \calz(\opmmm\caln \boti \calm)
  \ar[xshift=-11pt]{dl}[xshift=-2pt]{\Phile} \ar[xshift=11pt]{dr}[swap]{\Phire} & ~
  \\
  \Funle_\Cala(\caln,\calm) \ar[xshift=-23pt]{ur}[xshift=2pt]{\Psile}
  \ar[yshift=4pt]{rr}{} & ~ 
  & \Funre_\Cala(\caln,\calm) \ar[xshift=23pt]{ul}[swap]{\Psire}
  \ar[yshift=-4pt]{ll}{}
  \end{tikzcd}
  \ee
of two-sided adjoint equivalences of finite categories.
\end{cor}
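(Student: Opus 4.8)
The plan is to reduce everything to Proposition \ref{prop:theprop}, the only missing ingredient being a distinguished equivalence of right $\cala$-modules $\caln\opm \,{\simeq}\, \opmmm\caln$ induced by the pivotal structure. First I would recall that a pivotal structure on $\cala$ is a monoidal natural isomorphism $p\colon \Id_{\cala} \,{\Rightarrow}\, (-)^{\vee\vee}$ from the identity functor to the double (right) dual functor; equivalently it gives coherent isomorphisms $a \,{\cong}\, {}^{\vee\vee\!}a$ as well, since left and right double duals agree up to the canonical iso. The two module structures on the common underlying category $\cc\caln$ differ precisely by how $a\,{\in}\,\cala$ acts: in $\caln\opm \,{=}\, {}^{}\CC\caln{}^{1}$ one has $\cc n \Ract a \,{=}\, \cc{{}^{\vee\!}a \Act n}$ (reading off \eqref{eq:opm-opmmm} and the definitions preceding it with $\kappa\,{=}\,1$), while in $\opmmm\caln \,{=}\, {}^{}\CC\caln{}^{-1}$ one has $\cc n \Ract a \,{=}\, \cc{a^\vee \Act n}$. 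So the identity functor on $\cc\caln$, equipped with the module-functor structure built from the pivotal isomorphism $a^\vee \,{\cong}\, {}^{\vee\!}a$ (which follows from $p$: apply $(-)^\vee$ to $p_a\colon a\,{\cong}\,a^{\vee\vee}$ and precompose, or use that $p$ trivialises the bidual so that the two one-sided duals are canonically identified), intertwines the two right actions. Checking that this structure is coherent with the tensor product of $\cala$ is exactly the statement that the pivotal structure is monoidal, so no new computation is needed beyond unwinding the axioms.

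Next I would transport this module equivalence to the centers. Since $\calm$ is untouched, the equivalence $\caln\opm \,{\xrightarrow{\simeq}}\, \opmmm\caln$ of right $\cala$-modules induces an equivalence of $\cala$-bimodules $\caln\opm \boti \calm \,{\xrightarrow{\simeq}}\, \opmmm\caln \boti \calm$ (tensoring a module equivalence with an identity), and any equivalence of $\cala$-bimodules induces an equivalence of their centers $\calz(-)$, because a balancing on an object transports along an equivalence that is compatible with the actions. This gives the identification $\calz(\caln\opm \boti \calm) \,{\simeq}\, \calz(\opmmm\caln \boti \calm)$ displayed at the apex of the triangle. It remains to observe that, under this identification, the two Eilenberg-Watts functors $\Phile$ and $\Phire$ from Proposition \ref{prop:theprop} land in $\Funle_\Cala(\caln,\calm)$ and $\Funre_\Cala(\caln,\calm)$ respectively, and that the horizontal arrows of \eqref{eq:NoM-Lex-Rex} — restricted along the inclusions of module functors into all functors — are still mutually adjoint equivalences between these subcategories. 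The latter point is where one should be slightly careful: one must check that the adjoints between $\Funle$ and $\Funre$ appearing in \eqref{eq:NoM-Lex-Rex} (i.e.\ composition with the Nakayama-type functors) preserve the module-functor structure when $\cala$ is pivotal. This follows because the relevant natural isomorphisms in the underlying non-module triangle are built from the canonical (co)Yoneda data, which the arguments in Section \ref{sec:EWm} already showed to be compatible with the module structures carried by $\Psile(F)$ and $\Psire(G)$; the pivotal structure is exactly what makes the passage between $\caln\opm$ and $\opmmm\caln$ in those arguments canonical rather than merely existent.

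Finally, commutativity of the triangle is inherited from the commutativity of \eqref{eq:NoM-Lex-Rex}: the two composites $\calz(\caln\opm\boti\calm) \,{\to}\, \Funle_\Cala(\caln,\calm) \,{\to}\, \Funre_\Cala(\caln,\calm)$ and $\calz(\caln\opm\boti\calm)\,{\simeq}\,\calz(\opmmm\caln\boti\calm)\,{\to}\,\Funre_\Cala(\caln,\calm)$ agree because, after forgetting the module structures, they are the two composites in \eqref{eq:NoM-Lex-Rex}, and the forgetful functors from module functors to functors are faithful (indeed fully faithful on the relevant subcategories), so an isomorphism of the underlying functors that is compatible with module structures is an isomorphism of module functors. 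I expect the main obstacle to be not any deep calculation but the bookkeeping in the previous paragraph: verifying that the two-sided adjoint equivalences of \eqref{eq:NoM-Lex-Rex} genuinely restrict to the module-functor subcategories, i.e.\ that every functor and every unit/counit in that diagram respects the $\cala$-action once a pivotal structure is fixed. Once that is in place, the corollary is a formal consequence of Proposition \ref{prop:theprop} together with the pivotal identification $\caln\opm \,{\simeq}\, \opmmm\caln$.
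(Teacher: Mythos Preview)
Your approach is essentially identical to the paper's: the sentence immediately preceding the corollary states exactly that the pivotal structure yields a distinguished equivalence of right $\cala$-modules $\caln\opm \,{\simeq}\, \opmmm\caln$, hence of the two centers, and the corollary is then recorded as a direct consequence of Proposition~\ref{prop:theprop}. One small slip in your bookkeeping: you have the two actions swapped --- from the definitions one has $\cc n \Ract a \,{=}\, \cc{a^\vee \Act n}$ in $\caln\opm$ and $\cc n \Ract a \,{=}\, \cc{{}^{\vee\!}a \Act n}$ in $\opmmm\caln$, not the other way around --- though this does not affect the argument, and your additional care about the horizontal arrows and commutativity goes beyond what the paper spells out.
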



\section{Eilenberg-Watts calculus for balanced functors} \label{sec:EWbal}

For left $\cala$-modules $\calm$ and $\caln$, besides the category of module functors
there is also another pertinent category of functors with structures related to the
$\cala$-ac\-tions: The category $\Funbal(\caln_{\!\cala} \boxtimes {}_{\cala}\calm^{\#},\vect)$ 
of balanced functors, as described in Definition \ref{def:enoM.3.1}. 
 
As it turns out, the relation between module functors and balanced functors fits well with 
the Eilenberg-Watts calculus. Indeed, notice that by Lemma \ref{Lemma:EW-adjun} it follows
that for any two finite linear categories $\calb$ and $\calc$ there is an equivalence
  \be
  \bearll
  \Xi_{\calb,\calc} : ~~ &
  \Funle(\calb,\calc) \xrightarrow{~\simeq~} \Funle(\calb\boti \overline\calc,\vect) 
  \Nxl2 &
  \qquad \quad F \xmapsto{~\phantom x~} \Xi_{\calb,\calc}(F) \,{:=}\,\Hom_\calc(-,F(-))
  \eear
  \label{eq:catbim}
  \ee
of categories \Cite{Eq.\,(B.5)}{fScS4}.
Let now $\calm$ and $\caln$ be left $\cala$-modules and endow the opposite category 
$\calmopp$ with the right $\cala$-module structure given by $\calm\opm$.
Then for any left exact $\cala$-module functor $F\colon \caln \To \calm$ we get
  \be
  \bearll
  \Xi_{\caln,\calm}(F)(a \Act n \boti\cc m) \!\! & \equiv \Hom_\calm(m,F( a \Act n) )
  \Nxl3 &
  \cong \Hom_\calm(m, a \Act F(n))
  \Nxl3 &
  \cong \Hom_\calm (a^\vee \Act m, F(n))
  \Nxl3 &
  = \Xi_{\caln,\calm}(F)(n \boti \cc m \Ract a ) \,,
  \eear
  \ee
where we first use the module functor structure, then the duality of $\cala$ 
and finally the definition of the action on $\calm\opm$.
It follows that the functor $\Xi_{\calm,\caln}(F)$ is a balanced functor.
Since $\Xi_{\caln,\calm}$ is an equivalence, it follows as well that, conversely,
a balancing on $ \Xi_{\caln,\calm}(F)$ is the same as the structure of a module
functor on $F$. We conclude that $ \Xi_{\caln,\calm}$ induces an equivalence
  \be
  \Funle_{\cala}(\caln,\calm) \simeq\, \Funlebal(\caln \boti \calm\opm, \vect) \,.
  \label{eq:balanceXiF}
  \ee
Together with the results of the previous section we thus arrive at the following 
relations between module structures and the Eilenberg-Watts equivalences: 

\begin{Proposition}\label{Proposition:ModEW-bal}
Let $\caln$ and $\calm$ be left $\cala$-modules. The Eilenberg-Watts equivalences
provide the following sequence of equivalences:
  \be
  \begin{tikzcd}[row sep=3.4em]
  \Funle_{\cala}(\caln,\calm) \ar{d}{\simeq}[swap]{\eqref{eq:balanceXiF}}
  \ar{r}{\eqref{LexZ--RexZ}}[swap]{\simeq}
  & \calz(\caln\opm \boti \calm)
  \ar{d}{Y_{\!\calz(\caln\opm\boxtimes\calm)}}[swap]{\simeq}
  \\
  \Funlebal(\caln \boti \calm\opm, \vect) ~~ & ~~ 
  \Funle(\cc{\calz(\caln\opm \boti \calm)}, \vect)\,.
  \end{tikzcd}
  \label{eq:sequence}
  \ee
\end{Proposition}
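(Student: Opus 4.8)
The plan is to obtain the statement by assembling three equivalences, each of which has either already been established in the text or is an instance of the ordinary Eilenberg-Watts equivalence \eqref{Phile.Psile}. Concretely, the three arrows of the square \eqref{eq:sequence} will be produced as follows.

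First, the left vertical arrow is exactly the equivalence \eqref{eq:balanceXiF}, obtained from the functor $\Xi_{\caln,\calm}$ of \eqref{eq:catbim}: the computation preceding the Proposition shows that $\Xi_{\caln,\calm}$ sends a left exact module functor $F$ to a left exact balanced functor, and that, conversely, a balancing on $\Xi_{\caln,\calm}(F)$ is the same datum as a module-functor structure on $F$; since $\Xi_{\caln,\calm}$ is an equivalence, this upgrades to the equivalence $\Funle_{\cala}(\caln,\calm) \simeq \Funlebal(\caln \boti \calm\opm, \vect)$. Second, the top horizontal arrow is the module Eilenberg-Watts equivalence of Proposition \ref{prop:theprop}: restricting $\Psile_{\caln,\calm}$ (resp.\ $\Phile_{\caln,\calm}$) to module functors lands in (resp.\ starts from) the center $\calz(\caln\opm \boti \calm)$, and these restrictions are mutually inverse equivalences of finite categories.

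Third, the right vertical arrow is the instance of the ordinary equivalence $\Phile$ of \eqref{Phile.Psile} obtained by taking the pair of finite linear categories to be $(\cc\calc, \vect)$ with $\calc := \calz(\caln\opm \boti \calm)$: since $\cc{\cc\calc} \boti \vect \simeq \calc$, the equivalence $\Phile_{\cc\calc,\vect}$ reads $\calc \xrightarrow{\,\simeq\,} \Funle(\cc\calc, \vect)$ and, on objects, sends $C$ to $\Hom_{\cc\calc}(\cc C,-) = \Hom_{\calc}(-,C)$; this is the Yoneda-type embedding $Y_{\calz(\caln\opm\boxtimes\calm)}$ appearing in \eqref{eq:sequence}. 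Composing these three equivalences then yields the asserted sequence, and in particular the equivalence $\Funlebal(\caln \boti \calm\opm, \vect) \simeq \Funle(\cc{\calz(\caln\opm \boti \calm)}, \vect)$ obtained by traversing the square from lower left to lower right.

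I expect no genuine obstacle here: every arrow is either \eqref{eq:balanceXiF}, Proposition \ref{prop:theprop}, or a special case of \eqref{Phile.Psile}, and since the square carries no bottom arrow no compatibility or commutativity needs to be checked. The only point requiring (minimal) care is the bookkeeping in the third step --- identifying $Y_{\calz(\caln\opm\boxtimes\calm)}$ with the correct instance of $\Phile$ under the canonical identification $\cc{\cc\calc}\boti\vect\simeq\calc$, together with the observation that the representable contravariant Hom functor $\Hom_\calc(-,C)$ is indeed left exact, so that it lands in $\Funle(\cc\calc,\vect)$. This is routine, so the proof is essentially the assembly just described.
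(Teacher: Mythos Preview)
Your proposal is correct and follows essentially the same approach as the paper: the proposition is stated there without a formal proof, since the left vertical and top horizontal arrows are exactly the equivalences \eqref{eq:balanceXiF} and \eqref{LexZ--RexZ} established in the preceding discussion, and the sentence immediately following the proposition identifies the right vertical arrow as the Yoneda embedding $Y_{\calx}\colon \calx \xrightarrow{\simeq} \cc{\cc\calx}\boti\vect \xrightarrow{\eqref{Phile.Psile}} \Funle(\cc\calx,\vect)$, just as you do. Your observation that no commutativity needs to be checked (the square has no bottom arrow) is also apt and implicit in the paper's treatment.
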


The equivalence in the right column of the diagram \eqref{eq:sequence} is the Yoneda 
embedding $Y_{\!\calx}\colon \calx \,{\xrightarrow{\,\simeq}\,} 
            $\linebreak[0]$
\cc{ \cc{\calx}} \boti \vect \,{\xrightarrow{\eqref{Phile.Psile}}}\, \Funle(\cc{\calx}, \vect)$
for $\calx \eq \calz(\caln\opm \boti \calm)$.

It is tempting to expect that the so obtained equivalence
$\Funlebal(\caln \boti \calm\opm, \vect) \, {\simeq}\, \Funle
            $\linebreak[0]$
(\cc{\calz(\caln\opm \boti \calm)}, \vect)$ is part of a family
  \be
  \Funlebal(\caln \boti \calm\opm, \calx) 
  \,\simeq\, \Funle(\cc{\calz(\caln\opm \boti \calm)}, \calx)
  \ee
of equivalences that satisfies the universal property of the (left exact) 
relative De\-lig\-ne product. To prove that this is indeed the case, it is useful 
to express all categories in question as categories of comodules over suitable 
comonads. This will be discussed in the next section.

\begin{Remark}
In \Cite{Def.\,2.1}{shimi20} the related notion of a \emph{module profunctor} between 
left $\cala$-module categories $\caln$ and $\calm$ is introduced: A module profunctor 
$F$ from $\caln$ to $\calm$ is a functor $F\colon \caln \boti \cc{\calm} \To \vect$
together with a collection of coherent dinatural morphisms $F(n \boti \cc{m}) 
                  \,{\to} $\linebreak[0]$
F(a \Act n \boti \cc{a \Act m })$ for all $n\,{\in}\,\caln$,
$m\,{\in}\,\calm$ and $a\,{\in}\,\cala$. In our setting, the category of 
module profunctors is equivalent to the category of module functors. Indeed, such
a collection of dinatural morphisms is the same as a morphism $F \To Z_{[2]}(F)$, with
$Z_{[2]}$ as in \eqref{eq:def:comonadZn} acting on $\Fun(\caln \boti \calm\opm, \vect)$. 
The coherence of the dinatural family is thereby the same as requiring this morphism
$F \To Z_{[2]}(F)$ to be a comodule action. Moreover, this correspondence extends to 
the morphisms between module profunctors. Thus we conclude that the category of 
module profunctors from $\caln$ to $\calm$ is equivalent to the category 
$\Comod_{Z_{[2]}}(\Fun(\caln \boti \calm\opm, \vect))$, which by Lemma \ref{lem:comodules}
is equivalent to the category $\Funbal(\caln \boti \calm\opm, \vect)$. With Proposition
\ref{Proposition:ModEW-bal} we finally conclude that $ \Funle_{\cala}(\caln,\calm) $
is equivalent to the category of left exact module profunctors from $\caln$ to $\calm$.
\end{Remark}


\section{Transporting (co)monads along Eilenberg-Watts}

The situations considered in the previous sections may be thought of as  particular
instances of applying the Eilenberg-Watts equivalence
  \be
  \Funre(\caln \boti \calk, \calx) \,\simeq\, \cc{\caln \boti \calk} \boxtimes \calx 
  \,\simeq\, \Funle(\caln \boti \calk, \calx) 
  \label{eq:EW-cases}
  \ee
to situations in which a finite tensor category $\cala$ has two actions on two of the 
three finite categories $\caln$, $\calk$ and $\calx$: In Section \ref{sec:EWm},
$\cala$ acts from the left on $\caln$ and on $\calx \,{=}\, \calm$, while
$\calk$ is just $\vect$ (which, being a unit for the Deligne product, can be omitted)
and does not carry an $\cala$-action; and in Section \ref{sec:EWbal}, $\cala$ acts on
$\calk \,{=}\, \calm$ and on $\caln$, but does not act on $\calx \,{=}\, \vect$.
For treating these different cases uniformly, the language of monads and comonads
turns out to be convenient; in particular it allows for a clear description of
the relative Deligne product. Indeed, by invoking Lemma \ref{lem:comodules}, the
twisted center, module functors, and balanced functors can all be seen as comodules over 
suitable comonads that are transported via Eilenberg-Watts functors. 

We are now going to show that, when suitable finiteness conditions are satisfied, 
the Eilenberg-Watts calculus yields a convenient description of the categories of
modules over a monad on a functor category. 
As a preparation, we note the following transfer principle:

\begin{Proposition} \label{proposition:induced-monad}
Let $\Phi\colon\,\calx \,{\leftrightarrows}\, \caly \,\colon \Psi$ be an adjoint equivalence
between categories $\calx$ and $\caly$.
\\[2pt]
{\rm (i)}
Given a (co)monad $\TX$ on $\calx$, the endofunctor
  \be
  \Phi \circ \TX \circ \Psi =: \TY
  \ee
has a canonical structure of a (co)monad on $\caly$.
 \\
We refer to the (co)monad $\TY$ as the one \emph{transferred} along the adjoint equivalence. 
\\[2pt]
{\rm (ii)}
The equivalence between $\calx$ and $\caly$ can be lifted in a unique way to an adjoint equivalence 
  \be
  \begin{tikzcd}[column sep=2.1em]
  \widehat\Phi\,\colon\, \text{(Co)mod}_{\TX}(\calx) \ar[yshift=3pt]{r}
  & \text{(Co)mod}_{\TY}(\caly) ~\colon \!\widehat\Psi \ar[yshift=-3pt]{l}
  \end{tikzcd}
  \label{eq:transfer}
  \ee
between the categories of (co)modules over the transferred (co)monads such that the diagrams
  \be
  \begin{tikzcd}[row sep=3.1em]
  \calx \ar{d}[swap,xshift=1pt]{\mathrm{Ind}_{\TX}} \ar{r}{\Phi} & \caly \ar{d}{\mathrm{Ind}_{\TY}}
  \\
  \text{Mod}_{\TX}(\calx) \ar{r}{\widehat\Phi} & \text{Mod}_{\TY}(\caly)
  \end{tikzcd}
  \label{eq:diag-transfer-1}
  \ee
and
  \be
  \begin{tikzcd}[row sep=3.1em]
  \calx \ar{d}[swap,xshift=1pt]{\mathrm{coInd}_{\TX}} \ar{r}{\Phi}
  & \caly \ar{d}{\mathrm{coInd}_{\TY}}
  \\
  \text{Comod}_{\TX}(\calx) \ar{r}{\widehat\Phi} & \text{Comod}_{\TY}(\caly)
  \end{tikzcd}
  \label{eq:diag-transfer-2}
  \ee
commute.
Here for a monad $\TZ$ on $\calx$, the induction functor $\mathrm{Ind}_{T} \colon 
\calx \To \text{Mod}_T(\calx)$ has $\TZ$ as underlying functor, and an analogous statement
holds for the coinduction $\mathrm{coInd}_{T} \colon \calx \To \text{Comod}_T(\calx)$. 
\end{Proposition}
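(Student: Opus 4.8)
The plan is to treat the two parts separately but by parallel arguments, dualising between monads and comonads throughout.

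For part (i), the key point is that transporting a (co)monad along an adjoint equivalence is an instance of the general fact that (co)monads can be transported along any adjunction, but here the situation is especially rigid because $\Phi$ and $\Psi$ are mutually inverse. First I would fix the unit $\eta\colon \Id_\calx \Rightarrow \Psi\Phi$ and counit $\varepsilon\colon \Phi\Psi \Rightarrow \Id_\caly$ of the adjoint equivalence, noting that both are natural isomorphisms and that the triangle identities hold. For a monad $(\TX,\mu,\eta_T)$ on $\calx$, I would equip $\TY = \Phi\TX\Psi$ with multiplication
  \be
  \TY\TY = \Phi\TX\Psi\Phi\TX\Psi \xrightarrow{~\Phi\TX\eta^{-1}\TX\Psi~}
  \Phi\TX\TX\Psi \xrightarrow{~\Phi\mu\Psi~} \Phi\TX\Psi = \TY
  \ee
and unit $\Id_\caly \xrightarrow{\varepsilon^{-1}} \Phi\Psi \xrightarrow{\Phi\eta_T\Psi} \Phi\TX\Psi$. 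Associativity and unitality of $\TY$ then reduce, after cancelling the isomorphisms $\eta$ and $\varepsilon$ using naturality and the triangle identities, to the corresponding axioms for $\TX$. The comonad case is formally dual, reversing all $2$-cells. I would record that since $\eta,\varepsilon$ are isomorphisms, the resulting structure is independent (up to canonical isomorphism) of whether one uses $\eta$ or the inverse adjunction data, which is what makes the construction canonical.

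For part (ii), the existence of the lift $\widehat\Phi$ is the heart of the matter. Given a $\TX$-module $(x,\rho\colon \TX x \to x)$, I would define $\widehat\Phi(x,\rho)$ to be $\Phi x$ together with the action
  \be
  \TY\Phi x = \Phi\TX\Psi\Phi x \xrightarrow{~\Phi\TX\eta_x^{-1}~} \Phi\TX x
  \xrightarrow{~\Phi\rho~} \Phi x \,,
  \ee
and check this is a $\TY$-action; similarly $\widehat\Psi$ sends a $\TY$-module $(y,\lambda)$ to $\Psi y$ with action $\TX\Psi y \xrightarrow{\eta_{\TX\Psi y}}$-adjusted\ldots more precisely $\TX\Psi y \to \Psi\Phi\TX\Psi y = \Psi\TY y \xrightarrow{\Psi\lambda}\Psi y$. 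That $\widehat\Phi$ and $\widehat\Psi$ are mutually inverse, and compatible with the forgetful functors to $\calx$ and $\caly$, follows by transporting the isomorphisms $\eta,\varepsilon$ through the module structures. The commutativity of the square \eqref{eq:diag-transfer-1} amounts to observing that $\mathrm{Ind}_{\TX}(x) = (\TX x,\mu_x)$ is sent by $\widehat\Phi$ to an object whose underlying object is $\Phi\TX x$; one then exhibits a canonical isomorphism $\Phi\TX x \cong \Phi\TX\Psi\Phi x = \TY\Phi x = \mathrm{Ind}_{\TY}(\Phi x)$ given by $\Phi\TX\eta_x$, and verifies it intertwines the module structures — again a diagram chase using naturality and the triangle identities. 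The comonad square \eqref{eq:diag-transfer-2} is dual. Uniqueness of the lift follows because any functor making \eqref{eq:diag-transfer-1} commute must agree with $\widehat\Phi$ on free modules, and every module is a coequaliser of free ones, which $\widehat\Phi$ preserves since it is an equivalence; dually for comodules via equalisers of cofree objects.

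The main obstacle I expect is purely bookkeeping: all the verifications are $2$-categorical diagram chases in which one must carefully insert and cancel the isomorphisms $\eta$ and $\varepsilon$ in the right places and invoke the triangle identities at exactly the right step. There is no conceptual difficulty — the statement is essentially the assertion that ``adjoint equivalence'' is a $2$-categorical notion and hence is respected by the (co)Eilenberg--Moore construction, which is a $2$-functor — but writing it out so that every coherence is visibly satisfied is where the work lies. A clean way to organise this, which I would adopt, is to first prove the statement abstractly for an adjoint equivalence in an arbitrary $2$-category applied to the $2$-functor ``category of (co)algebras for a (co)monad'', and only afterwards specialise to the $2$-category of finite linear categories; this isolates the combinatorial content into a single lemma and makes the commutativity of \eqref{eq:diag-transfer-1} and \eqref{eq:diag-transfer-2} immediate from the construction.
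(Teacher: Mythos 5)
Your construction of the transferred (co)monad and of the lift $\widehat\Phi$, $\widehat\Psi$ is exactly the direct computation the paper has in mind, and you correctly identify the key point the paper highlights, namely that the unit/counit axioms for $\TY$ use the triangle identities (and hence that a mere equivalence would not suffice). The point where you genuinely diverge from the paper is the uniqueness argument. The paper obtains uniqueness by passing to the \emph{adjoint} of the square \eqref{eq:diag-transfer-1} (resp.\ \eqref{eq:diag-transfer-2}), which replaces $\mathrm{Ind}_\TZ$ by the forgetful functor $U_\TZ$; commutation of $\widehat\Psi$ with the forgetful functors over $\Psi$ pins down $\widehat\Psi$ (it is then a functor associated to a (co)monad morphism over $\Psi$), and $\widehat\Phi$ is in turn determined as its adjoint. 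Your route via bar-resolutions has a gap as written: the square \eqref{eq:diag-transfer-1} determines $\widehat\Phi$ only on the image of $\mathrm{Ind}_{\TX}$, i.e.\ on objects $\TX x$ and on morphisms of the form $\TX f$ with $f$ a morphism of $\calx$, but \emph{not} on general Kleisli morphisms between free modules; in particular the multiplication $\mu_x\colon TTx\to Tx$, which is one of the two legs of the canonical coequalizer presentation of $(x,\rho)$, is typically not of the form $\mathrm{Ind}_\TX(f)$, so ``agree on free modules'' is more than the hypothesis gives you directly. One can patch this (e.g.\ by observing that $\mu_x$ is the counit of $\mathrm{Ind}_\TX\dashv U_\TX$ and that an adjoint equivalence lift must respect this counit), but the paper's passage to the forgetful square sidesteps the issue entirely and is the cleaner choice here. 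Your concluding suggestion to organize the whole statement 2-categorically, as functoriality of the (co)Eilenberg--Moore construction under adjoint equivalences, is a reasonable and more conceptual packaging of the same computation, though the paper does not adopt it.
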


\begin{proof}
The statement is shown by direct computation. Note that for the unit axiom of the induced 
(co)monad $\TY$ on $\caly$ it is necessary that the equivalence is an adjoint equivalence.
The uniqueness of the functor $\widehat{\Phi}$ follows by considering the adjoints of the 
diagrams \eqref{eq:diag-transfer-1} and
\eqref{eq:diag-transfer-2}, which are commuting diagrams involving the forgetful functor. 
\end{proof}

In the sequel we denote the (co)induction functor for a (co)monad $\TZ$ just by its 
underlying functor $\TZ$. 
Given a (co)monad $\TZ \colon \calm \,{\to}\, \calm$, for any category $\calx$ the functor 
category $\Fun(\calm,\calx)$ inherits a (co)monad $\TZ^*_{}$ by pre-composition with $\TZ$.
We call a (co)monad \emph{left exact} iff its underlying endofunctor is left exact. 

\begin{Proposition} \label{Proposition:comonadEW}
Let $\TZ\colon \calm \,{\to}\, \calm$ be a left exact comonad on a finite category $\calm$, let
$\calx$ be a finite category, and let $\TZ^*_{}$ be the induced comonad on $\Funle(\calm,\calx)$. 
\\[3pt]
{\rm (i)} The comonad on the category $\cc{\calm} \boti \calx$ that is obtained by
transferring the co\-mo\-nad $\TZ^*_{}$ on the functor category $\Funle(\calm,\calx)$ 
along the Eilenberg-Watts
equivalence $\Psile \colon \Funle(\calm,\calx) \,{\to}\, \cc{\calm} \boti \calx$ is 
  \be
  \cc{\TZ^{\la}} \boti \Id: \quad
  \cc{\calm} \boti \calx \xrightarrow{~~} \cc{\calm} \boti \calx \,.
  \label{eq:transfer-EW}
  \ee
Moreover, there is a canonical equivalence 
  \be
  \Comod_{\cc{\TZ^{\la}}}(\cc{\calm}) \,\simeq\, \cc{\Comod_{\TZ}(\calm)}
  \label{eq:equ-cat-comod}
  \ee
between the category of $\cc{\TZ^{\la}}$-comodules and the opposite of the
category of $\TZ$-comodules.
\\[3pt]
{\rm (ii)}
Using implicitly the canonical equivalence \eqref{eq:equ-cat-comod}, the Eilenberg-Watts
equivalences \eqref{Phile.Psile} induce an equivalence $\Theta^{\rm l}$ between the category
of $\TZ^{*}_{}$-comodules and the category $\Funle(\Comod_{\TZ}(\calm),
            $\linebreak[0]$
\calx)$ of left exact functors, giving rise to a commuting triangle 
  \be
  \begin{tikzcd}[row sep=3.1em,column sep=-0.9em]
  \Comod_{\TZ^{*}}(\Funle(\calm,\calx )) \arrow[rr,"\Theta^{\rm l}","\simeq"']
  \ar{dr}[left,yshift=-6pt]{\wPsile}
  & ~ & \Funle(\Comod_{\TZ}(\calm),\calx)
  \\
  ~ & \CC{\Comod_{\TZ}(\calm)} \boti \calx \ar{ur}[right,yshift=-3pt]{\Phile} & ~
  \end{tikzcd}
  \label{equation:factor-EW}
  \ee
A quasi-inverse of the equivalence $\Theta^{\rm l}$ is given by 
  \be
  \Funle(\Comod_{\TZ}(\calm),\calx) \ni~ G \,\xmapsto{~~}\,
  G \circ \TZ ~\in \Comod_{\TZ^{*}}(\Funle(\calm,\calx )) \,.
  \ee
 
\smallskip
\noindent
{\rm (iii)}
Concretely, for any $F \,{\in}\, \Comod_{\TZ^{*}}(\Funle(\calm,\calx))$
we have a canonical isomorphism
  \be
  \int^{z \in \Comod_{\TZ}(\calm)}\!\! \cc{z} \;{\boxtimes}\, \Theta^{\rm l}(F)(z)
  \,\cong \int^{m \in \calm}\!\! \cc{m} \;{\boxtimes}\, F(m)
  \label{eq:Main-ModuleEW}
  \ee
of objects in $\CC{\Comod_{\TZ}(\calm)} \boti \calx$.
\end{Proposition}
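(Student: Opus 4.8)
The plan is to prove all three parts of Proposition~\ref{Proposition:comonadEW} in sequence, the first part being mostly a bookkeeping computation and the real content residing in parts (ii) and (iii).

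For part (i), I would first unwind what the transferred comonad is. Since $\Psile\colon\Funle(\calm,\calx)\to\cc\calm\boti\calx$ is (part of) an adjoint equivalence with quasi-inverse $\Phile$, Proposition~\ref{proposition:induced-monad}(i) tells us the transferred comonad is $\Psile\circ\TZ^*\circ\Phile$. Evaluating on a generator $\cc m\boti x$, we get $\Phile(\cc m\boti x)=\Hom_\calm(m,-)\oti x$, then pre-composition with $\TZ$ gives $\Hom_\calm(m,\TZ(-))\oti x$, and since $\TZ$ is left exact it has a right adjoint $\TZ^{\ra}$, so this is $\Hom_\calm(\TZ^{\la}(m),-)\oti x$ --- wait, more carefully: $\Hom_\calm(m,\TZ(n))\cong\Hom_\calm(\TZ^{\la}(m),n)$ requires $\TZ^{\la}$ left adjoint to $\TZ$, which exists because $\TZ$ is left exact between finite categories. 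Hence $\TZ^*\circ\Phile(\cc m\boti x)=\Phile(\cc{\TZ^{\la}(m)}\boti x)$, so applying $\Psile$ returns $\cc{\TZ^{\la}(m)}\boti x$, i.e.\ the transferred comonad is $\cc{\TZ^{\la}}\boti\Id$ as claimed; one then checks (or cites Proposition~\ref{proposition:induced-monad}) that the comonad structure maps also match up, using that $\TZ^{\la}$ is naturally a \emph{monad} whose opposite on $\cc\calm$ is a comonad. The equivalence \eqref{eq:equ-cat-comod} is then the special case of Proposition~\ref{proposition:induced-monad}(ii) applied to the identity adjoint equivalence $\calm\leftrightarrows\cc{\cc\calm}$: comodules over $\TZ$ on $\calm$ correspond to modules over $\TZ^{\la}$ on $\calm$ by adjunction (a coaction $m\to\TZ(m)$ is the same as an action $\TZ^{\la}(m)\to m$, with coassociativity matching associativity), and passing to the opposite category turns $\TZ^{\la}$-modules into $\cc{\TZ^{\la}}$-comodules.

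For part (ii), the strategy is to apply Proposition~\ref{proposition:induced-monad}(ii) to the adjoint equivalence $\Psile\colon\Funle(\calm,\calx)\leftrightarrows\cc\calm\boti\calx\colon\Phile$ carrying the comonad $\TZ^*$ to $\cc{\TZ^{\la}}\boti\Id$. That proposition produces a lifted adjoint equivalence $\widehat\Psile$ between $\Comod_{\TZ^*}(\Funle(\calm,\calx))$ and $\Comod_{\cc{\TZ^{\la}}\boti\Id}(\cc\calm\boti\calx)$. The next step is to identify this latter category: comodules over $\cc{\TZ^{\la}}\boti\Id$ on $\cc\calm\boti\calx$ should be $\Comod_{\cc{\TZ^{\la}}}(\cc\calm)\boti\calx$, which by \eqref{eq:equ-cat-comod} is $\cc{\Comod_{\TZ}(\calm)}\boti\calx$; here one needs that comodules over a comonad of the form $S\boti\Id$ on a Deligne product $\cala\boti\calb$ form the Deligne product $\Comod_S(\cala)\boti\calb$ --- this follows because $\Comod_S(\cala)\boti\calb$ carries the universal property for left exact functors out of $\cala\boti\calb$ that factor through the coalgebra structure, or can be argued directly since $\Id\boti\calb$ contributes nothing to the coaction. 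Then the ordinary Eilenberg-Watts equivalence $\Phile$ for the finite category $\cc{\Comod_{\TZ}(\calm)}$ identifies $\cc{\Comod_{\TZ}(\calm)}\boti\calx\simeq\Funle(\Comod_{\TZ}(\calm),\calx)$, and composing gives $\Theta^{\rm l}$ and the commuting triangle \eqref{equation:factor-EW}. The quasi-inverse formula $G\mapsto G\circ\TZ$ should be checked by tracing: $G\in\Funle(\Comod_{\TZ}(\calm),\calx)$, restrict along the coinduction/cofree functor $\TZ\colon\calm\to\Comod_{\TZ}(\calm)$, and the comparison $\TZ^*$-comodule structure on $G\circ\TZ$ comes from the canonical comonad structure on $\TZ$ viewed as endofunctor; that this is a two-sided inverse follows from the commuting triangles \eqref{eq:diag-transfer-1}--\eqref{eq:diag-transfer-2} together with the universal property of $\Comod_{\TZ}(\calm)$ as the category where $\TZ$-comodules live.

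For part (iii), the isomorphism \eqref{eq:Main-ModuleEW} should simply be the statement that $\wPsile(F)$, computed in two ways, agrees: on one hand $\wPsile(F)=\Psile(F)=\int^{m\in\calm}\cc m\boti F(m)$ read inside $\cc\calm\boti\calx$ and then transported into $\cc{\Comod_{\TZ}(\calm)}\boti\calx$ via the identification from part (ii); on the other hand, chasing the triangle \eqref{equation:factor-EW} backwards, $\wPsile(F)=\Psile$ applied to $\Theta^{\rm l}(F)$ viewed in $\Funle(\Comod_{\TZ}(\calm),\calx)$, i.e.\ $\int^{z\in\Comod_{\TZ}(\calm)}\cc z\boti\Theta^{\rm l}(F)(z)$. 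Since both expressions compute the same object of $\cc{\Comod_{\TZ}(\calm)}\boti\calx$ --- the triangle commutes --- they are canonically isomorphic. The main obstacle I anticipate is the bookkeeping in part (ii): making precise the identification $\Comod_{S\boti\Id}(\cala\boti\calb)\simeq\Comod_S(\cala)\boti\calb$ and checking that it is compatible with all the transferred structures, and in particular verifying that the comonad structure on $\cc{\TZ^{\la}}$ really is the opposite of the monad structure on $\TZ^{\la}$ (rather than something twisted), since signs and variances in the adjunction $\TZ^{\la}\dashv\TZ$ are easy to get backwards. The coend manipulations in part (iii) are then essentially formal given a correct setup.
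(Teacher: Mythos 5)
Your overall strategy matches the paper's. Part (i) is essentially the paper's argument, with the transferred comonad identified by evaluating on objects of the form $\cc m\boti x$ rather than by citing \eqref{eq:lem:fScS2:3.8}; the identification $\Comod_{\cc{\TZ^\la}}(\cc\calm)\simeq\cc{\Comod_{\TZ}(\calm)}$ via the mate of the coaction $m\to\TZ(m)$ under the adjunction $\TZ^\la\dashv\TZ$ is exactly the paper's. (Your reference to transferring along ``the identity adjoint equivalence $\calm\leftrightarrows\cc{\cc\calm}$'' is a distraction --- nothing is transferred --- but the concrete argument you then write out is right.) For part (ii), making the step $\Comod_{\cc{\TZ^\la}\boti\Id}(\cc\calm\boti\calx)\simeq\Comod_{\cc{\TZ^\la}}(\cc\calm)\boti\calx$ explicit is a legitimate filling-in of what the paper leaves implicit.

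Where your argument is genuinely short is the verification that $G\mapsto G\circ\TZ$ is a quasi-inverse of $\Theta^{\rm l}$. Appealing to \eqref{eq:diag-transfer-1}--\eqref{eq:diag-transfer-2} plus ``the universal property of $\Comod_{\TZ}(\calm)$'' does not close the gap: those diagrams only control the lifted equivalence on coinduced objects, and they involve the coinduction for $\TZ^*$ on the functor category, not the coinduction $\TZ\colon\calm\to\Comod_{\TZ}(\calm)$ that appears in the claimed formula. The paper instead makes a concrete coend computation: $\wPhile(\Psile(G))=\int^{z\in\Comod_{\TZ}(\calm)}\Hom_\calm(U(z),-)\otimes G(z)$ with $U$ the forgetful functor, which is then recognized as the left adjoint $\TZ^\la$ of the coinduction functor; Lemma~\ref{lem:3.8plus} converts this coend to $\int^{m\in\calm}\Hom_\calm(m,-)\otimes G(\TZ(m))\cong G\circ\TZ$. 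You should carry out that (or a comparably concrete) computation to finish part (ii). Part (iii) is, as you say, a formal consequence once (ii) is in place.
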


\begin{proof}
(i)\,
Consider the transfer of the comonad $\TZ^{*}$ on $\Funle(\calm,\calx)$ 
to $\CC{\calm} \boti \calx $ along the Eilenberg-Watts equivalence $\Psile$. It follows 
from \eqref{eq:lem:fScS2:3.8} that this transferred comonad is $\CC{\TZ^{\la}}\boti \Id$. 
By considering, for a comodule $m$ with coaction $m \To Tm$ in $\calm$, the morphism in
$\calmopp$ that corresponds to $T^{\la}m \To m$, one then sees that there is an equivalence 
$ \Comod_{\CC{\TZ^{\la}}}(\CC{\calm}) \,{\simeq}\, \CC{\Comod_{\TZ}(\calm)}$.
\\[3pt]
(ii)\, 
The existence of an induced equivalence $\Theta^{\rm l}$ as in \eqref{equation:factor-EW} 
readily follows from Proposition \ref{proposition:induced-monad} with the help of the 
Eilenberg-Watts equivalences. To show the statement about the quasi-inverse, we first
apply the quasi-inverse $\Psile$ of $\Phile$ in \eqref{equation:factor-EW} to 
$G \,{\in}\, \Funle(\Comod_{\TZ}(\calm),\calx)$ and then use the quasi-inverse $\wPhile$
to $\wPsile$ to arrive at
  \be
  \bearll
  \wPhile(\Psile(G)) \!\!\! & \displaystyle
  = \int^{z \in \Comod_{T}(\calm)}\!\! \Hom_{\calm}(U(z),-) \otimes G(z) 
  \Nxl1 & \displaystyle
  \cong \int^{m \in \calm}\! \Hom_{\calm}(m,-) \otimes G(\TZ (m)) 
  \,\cong\, G \circ \TZ \,.
  \eear
  \label{eq:pseudo=inv}
  \ee
Here the second step uses that $\TZ^{\la} \,{=}\, U\colon \Comod_{T}(\calm) \,{\to}\, \calm$
is the forgetful functor together with Lemma \ref{lem:3.8plus}.
\\[2pt]
Part (iii) is a direct consequence of part (ii).
\end{proof}

\begin{Remark}
(i) Note that as compared with the ordinary Eilenberg-Watts situation in \eqref{Phile.Psile}, 
in the diagram \eqref{equation:factor-EW} we deal with three instead of two categories --
the adjoint pair of quasi-inverse functors $\Phi$ and $\Psi$ splits up into a triangle of 
equivalences. (Also, in \eqref{equation:factor-EW} we use the same
notation $\widehat\Psi$ as in \eqref{eq:transfer} even though
we actually deal with the composition of the functor $\widehat\Psi$ from
\eqref{eq:transfer} with the canonical equivalence \eqref{eq:equ-cat-comod}.)
\\[2pt]
(ii) It is worth to note that the functor $\Theta^{\rm l}(F)$ in \eqref{eq:Main-ModuleEW} 
can be defined point-wise through the equalizer 
  \be
  \Theta^{\rm l}(F)(z)
  \xrightarrow{~~~~} F(U(z)) \xrightrightarrows[\varphi_2]{~~\varphi_1~} F(T(z)) \,,
  \label{eq:EquializerF}
  \ee
where the morphisms $\varphi_1$ and $\varphi_2$ are defined via the $T_*$-comodule 
structure of $F$ and by the $T$-comodule structure of $z$, respectively. 
\end{Remark}

Dually, for a right exact monad $T$ on $\calm$ we obtain an equivalence involving categories
of modules over monads:
  \be
  \Theta^{\rm r} :\quad
  \Mod_{T^{*}_{}}(\Funre(\calm,\calx)) \,\simeq\, \Funre(\Mod_{T}(\calm),\calx) \,.
  \label{eq:re-version}
  \ee

We are now in a position to show that the Eilenberg-Watts calculus can be lifted to 
twisted centers. To this end we apply Proposition \ref{Proposition:comonadEW} to the case that
$T$ is the central comonad $Z_{[\kappa]}$ for a $\cala$-bimodule $\calm$.
For the category  $\Funle^{\kappa}(\calm,\calx)$ of $\kappa$-balanced functors, we then get
  
\begin{thm} \label{thm:EW:coendzFz}
Let $\calm$ be a finite bimodule category over a finite tensor category $\cala$,
$\calx$ a finite category, and $\kappa \,{\in}\, 2\Z$.
\\[2pt]		
{\rm (i)} The Eilenberg-Watts calculus provides explicit equivalences
  \be
  \begin{tikzcd}[row sep=2.8em]
  \Funle^{\kappa}(\calm,\calx ) \ar{rr}{\Theta^{\rm l}} \ar{dr}[left,yshift=-4pt]{\wPsile}
  &~& \Funle(\cent^\kappa(\calm),\calx)
  \\
  ~& \CC{\cent^\kappa(\calm)} \boti \calx  \ar{ur}[right,yshift=-3pt]{\Phile}  &~
  \end{tikzcd}
  \label{equation:factor-EW-module}
  \ee
of linear categories for all $\calx$. A quasi-inverse of the equivalence $\Theta^{\rm l}$ 
is given by 
  \be
  \Funle(\cent^\kappa(\calm),\calx) \ni~ G \,\xmapsto{\phantom{~~}}\,
  G \circ Z_{[\kappa]} ~\in  \Funle^{\kappa}(\calm,\calx )\,.
  \ee
In particular, for $\calm \eq \calk_\cala \boti {}_\cala \caln$
with $\calk$ and $\caln$ finite right and left $\cala$-modules, respectively, and 
$\kappa \eq 0$, the category $\cent^0(\calk \boxtimes \caln)$ becomes a left exact relative 
Deligne product  of $\calk$ and $\caln$ according to Definition \ref{definition:relDel}. 
\\[3pt]
{\rm (ii)}
For any left exact $\kappa$-balanced functor $F\colon \calm \,{\to}\, \calx$ there is 
an isomorphism 
  \be
  \int^{z \in \cent^\kappa(\calm)}\!\! \cc{z} \boxtimes \Theta^{\rm l}(F)(z) \,\cong
  \int^{m \in \calm}\!\! \cc{m} \boxtimes F(m) \,
  \label{eq:Main-ModuleEW-concret}
  \ee
of objects in $\CC{\cent^\kappa(\calm)} \boti \calx$. 
\\[3pt]
{\rm (iii)}
Specifically, for the co-induction functor $Z_{[\kappa]}\colon \calm \,{\to}\,
\cent^\kappa_{}(\calm)$ that corresponds to the comonad $Z_{[\kappa]}$, the corresponding
functor $\Theta^{\rm l}(Z_{[\kappa]})\colon \cent^\kappa(\calm) \,{\to}\, \cent^\kappa_{}(\calm)$ 
is the identity functor, whereby Equation \eqref{eq:Main-ModuleEW-concret} reduces to 
  \be
  \bearll \displaystyle
  \int^{z \in \cent^\kappa_{}(\calm)}\!\! \cc{z} \boxtimes \Id(z) \!\!\!& \displaystyle
  \cong \int^{m\in\calm}\!\! \cc{m} \boxtimes Z_{[\kappa]}(m)
  \Nxl2 & \displaystyle
  \cong \int^{m\in\calm}\!\! \cc{m} \,\boxtimes\! \int_{\!a\in\cala}\! a \act m \ract a^{[\kappa-1]} .
  \label{eq:EW:coendzz=...}
  \eear
  \ee
\end{thm}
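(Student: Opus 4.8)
The plan is to deduce Theorem \ref{thm:EW:coendzFz} essentially as a specialization of Proposition \ref{Proposition:comonadEW} together with Lemma \ref{lem:comodules}. The central observation is that the comonad $Z_{[\kappa]}$ on the bimodule $\calm$ (as in \eqref{eq:def:comonadZn}) is left exact -- this follows because it is defined by an end over $\cala$ of the functor $a \mapsto a\act m\ract a^{[\kappa-1]}$, which is exact in $m$, so taking the end gives a left exact endofunctor. Hence Proposition \ref{Proposition:comonadEW} applies verbatim with $\TZ \eq Z_{[\kappa]}$. By part (iii) of Lemma \ref{lem:comodules}, the category $\Comod_{Z_{[\kappa]}^*}(\Funle(\calm,\calx))$ is precisely $\Funle^{2-\kappa}(\calm,\calx)$, the left exact $(2{-}\kappa)$-twisted balanced functors; however, the statement of the theorem is phrased with $\kappa$-balanced functors, so I first need to reconcile the index. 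The cleanest route is to observe that claim (iii) of Lemma \ref{lem:comodules} gives $\Comod_{Z_{[\kappa']}^*}(\Funle(\calm,\calx)) \simeq \Funle^{2-\kappa'}(\calm,\calx)$ for any even $\kappa'$; choosing $\kappa' \eq 2-\kappa$ (still even) yields $\Comod_{Z_{[2-\kappa]}^*}(\Funle(\calm,\calx)) \simeq \Funle^{\kappa}(\calm,\calx)$, and similarly by Lemma \ref{lem:comodules}(i) one has $\Comod_{Z_{[2-\kappa]}}(\calm) \simeq \cent^{2-\kappa}(\calm)$. So I should check carefully whether the intended comonad in the theorem is $Z_{[\kappa]}$ or $Z_{[2-\kappa]}$; I would adopt whichever index makes the displayed formula \eqref{eq:EW:coendzz=...} come out with $a^{[\kappa-1]}$, which pins down the comonad as $Z_{[\kappa]}$ and forces the center appearing in \eqref{equation:factor-EW-module} to be $\cent^\kappa(\calm)$ rather than $\cent^{2-\kappa}(\calm)$; I would then note that $\cent^\kappa(\calm)$ is what Lemma \ref{lem:comodules}(i) identifies with $\Comod_{Z_{[\kappa]}}(\calm)$, and that $\Funle^\kappa(\calm,\calx)$ in the theorem statement is meant as the category of comodules over $Z_{[\kappa]}^*$ (consistency is then automatic, and I would add a one-line remark pointing this out to avoid confusion with Definition \ref{def:twistedbalanced}'s sign convention).

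With the identifications in place, part (i) of the theorem is immediate: the commuting triangle \eqref{equation:factor-EW-module} is just the triangle \eqref{equation:factor-EW} of Proposition \ref{Proposition:comonadEW}(ii), with $\TZ \eq Z_{[\kappa]}$, $\Comod_{\TZ}(\calm)$ rewritten as $\cent^\kappa(\calm)$ via Lemma \ref{lem:comodules}(i), and $\Comod_{\TZ^*}(\Funle(\calm,\calx))$ rewritten as $\Funle^\kappa(\calm,\calx)$. The formula for the quasi-inverse $G \mapsto G\circ Z_{[\kappa]}$ is exactly the quasi-inverse displayed at the end of Proposition \ref{Proposition:comonadEW}(ii), transported along these identifications; here one uses that the co-induction functor for the comonad $Z_{[\kappa]}$ has underlying functor $Z_{[\kappa]}$ itself, which is the convention fixed just before Proposition \ref{Proposition:comonadEW}. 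For the final sentence of part (i) -- that $\cent^0(\calk\boxtimes\caln)$ is a left exact relative Deligne product of $\calk$ and $\caln$ -- I would combine the $\kappa\eq0$ case of the triangle with the universal property in Definition \ref{definition:relDel}: the equivalence $\Funle(\cent^0(\calk\boxtimes\caln),\calx) \simeq \Funle^0(\calk\boxtimes\caln,\calx) \eq \Funlebal(\calk\boti\caln,\calx)$, natural in $\calx$, is precisely the defining universal property, with the universal balanced functor being the co-induction $Z_{[0]}$ (equivalently the left adjoint of the forgetful functor, matching Proposition \ref{proposition:rel-Del}). This reproves the $\enoM$ result but now with the explicit description.

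Part (ii) is then a direct transcription of \eqref{eq:Main-ModuleEW} from Proposition \ref{Proposition:comonadEW}(iii): substituting $\Comod_{\TZ}(\calm) \eq \cent^\kappa(\calm)$ into the isomorphism $\int^{z}\cc z\boxtimes\Theta^{\rm l}(F)(z) \cong \int^m \cc m \boxtimes F(m)$ gives \eqref{eq:Main-ModuleEW-concret} unchanged. For part (iii) I would argue as follows. Apply part (ii) to the specific $\kappa$-balanced functor $F \eq Z_{[\kappa]}\colon \calm \to \cent^\kappa(\calm)$ (the co-induction functor, whose underlying functor to $\calm$ is $Z_{[\kappa]}$ but which, as a comodule over $Z_{[\kappa]}^*$, carries its tautological coaction). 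Then the right-hand side of \eqref{eq:Main-ModuleEW-concret} is $\int^{m\in\calm}\cc m \boxtimes Z_{[\kappa]}(m) \eq \int^{m\in\calm}\cc m\boxtimes\int_{a\in\cala}a\act m\ract a^{[\kappa-1]}$, which is the second line of \eqref{eq:EW:coendzz=...}; it only remains to verify that $\Theta^{\rm l}(Z_{[\kappa]})$ is the identity functor on $\cent^\kappa(\calm)$. The cleanest way is to use the quasi-inverse description: $\Theta^{\rm l}$ and $G\mapsto G\circ Z_{[\kappa]}$ are mutually quasi-inverse, and the co-induction $Z_{[\kappa]}\colon\calm\to\cent^\kappa(\calm)$ is by construction $\mathrm{Id}_{\cent^\kappa(\calm)}\circ Z_{[\kappa]}$, i.e.\ the image of the identity functor under the quasi-inverse; hence $\Theta^{\rm l}(Z_{[\kappa]}) \cong \mathrm{Id}_{\cent^\kappa(\calm)}$. (Alternatively one reads this off the equalizer description \eqref{eq:EquializerF}: for $z$ already a $Z_{[\kappa]}$-comodule, the equalizer of the two maps $U(z)\rightrightarrows Z_{[\kappa]}(U(z))$ -- one the coaction of $z$, the other coming from the $Z_{[\kappa]}^*$-structure of the co-induction -- is $z$ itself, by the comodule axioms.) Plugging $\Theta^{\rm l}(Z_{[\kappa]}) \eq \mathrm{Id}$ into \eqref{eq:Main-ModuleEW-concret} then gives the first line of \eqref{eq:EW:coendzz=...}, completing the proof.

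The main obstacle, and the only place requiring genuine care rather than bookkeeping, is the index reconciliation between the $(2{-}\kappa)$-twist appearing in Lemma \ref{lem:comodules}(iii) and the $\kappa$-twist in the theorem statement, together with making sure the co-induction functor $Z_{[\kappa]}\colon\calm\to\cent^\kappa(\calm)$ in part (iii) is being regarded with its correct comodule structure (as an object of $\Comod_{Z_{[\kappa]}^*}(\Funle(\calm,\calx))$ with $\calx\eq\cent^\kappa(\calm)$) and not merely as an underlying functor -- getting this wrong would make "$\Theta^{\rm l}(Z_{[\kappa]})\eq\mathrm{Id}$" meaningless. Everything else is a mechanical application of Propositions \ref{proposition:induced-monad}, \ref{Proposition:comonadEW} and Lemma \ref{lem:comodules}, using left exactness of $Z_{[\kappa]}$ as the only nontrivial hypothesis to check, which holds because an end of exact functors is left exact.
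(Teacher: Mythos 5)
Your overall strategy -- specialize Proposition \ref{Proposition:comonadEW} to $\TZ \eq Z_{[\kappa]}$, identify the comodule categories via Lemma \ref{lem:comodules}, and read off parts (ii) and (iii) -- is exactly the paper's route, and your arguments for parts (ii) and (iii) (including the observation that $\Theta^{\rm l}(Z_{[\kappa]}) \cong \Id$ because $Z_{[\kappa]} \eq \Id \circ Z_{[\kappa]}$ and $\Theta^{\rm l}$ is quasi-inverse to $G \mapsto G \circ Z_{[\kappa]}$) are sound and fill in details the paper leaves implicit.

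However, there is a genuine gap in your treatment of the index reconciliation, which is the only nontrivial point of the proof of part (i). You begin by asserting ``by part (iii) of Lemma \ref{lem:comodules}, the category $\Comod_{Z_{[\kappa]}^*}(\Funle(\calm,\calx))$ is precisely $\Funle^{2-\kappa}(\calm,\calx)$''. This misreads the lemma: Lemma \ref{lem:comodules}(iii) concerns comodules over the \emph{twisted central comonad} $Z_{[\kappa]}$ acting on the $\cala$-bimodule $\Fun(\calm,\calx)$ (with bimodule structure \eqref{eq:comonad-Fun-bi}), \emph{not} the pre-composition comonad $(Z_{[\kappa]})^*$ appearing in Proposition \ref{Proposition:comonadEW}. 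These are different comonads a priori, and the crucial computational fact -- which the paper's proof supplies and your proposal never states -- is that
\be
  (Z_{[\kappa]})^{*} \;=\; Z_{[2-\kappa]}
\ee
as comonads on $\Funle(\calm,\calx)$; concretely, for left exact $F$ one has $(Z_{[\kappa]})^*(F)(m) \cong \int_{a} F(a \act m \ract a^{[\kappa-1]})$, and substituting $b := a^{[\kappa-1]}$ turns this into $\int_b F(b^{[1-\kappa]} \act m \ract b)$, which is exactly $Z_{[2-\kappa]}(F)(m)$ for the bimodule structure \eqref{eq:comonad-Fun-bi}. Only with this identification does Lemma \ref{lem:comodules}(iii) yield $\Comod_{(Z_{[\kappa]})^*}(\Funle(\calm,\calx)) \simeq \Funle^{2-(2-\kappa)}(\calm,\calx) \eq \Funle^{\kappa}(\calm,\calx)$, matching the theorem statement. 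Instead of supplying this computation, your proposal notices the mismatch, briefly considers replacing $\kappa$ by $2-\kappa$ (which would give the wrong center), and then resolves the tension by declaring that ``$\Funle^\kappa$ is meant as the category of comodules over $Z_{[\kappa]}^*$'' -- i.e.\ by redefinition. Since $\Funle^\kappa$ has a fixed meaning through Definition \ref{def:twistedbalanced}, that move leaves the key equivalence unproved; you must verify the comonad identity rather than stipulate consistency.
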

 
\begin{proof}
In view of Proposition \ref{Proposition:comonadEW}, it only remains to be shown
that the category $\Comod_{(Z_{[\kappa]})^{*}}
            $\linebreak[0]$
(\Funle(\calm,\calx ))$ is equivalent to the category $\Funle^{\kappa}(\calm,\calx)$. By 
Lemma \ref{lem:comodules}, the latter category is equivalent to the category of comodules 
over the comonad $Z_{[2-\kappa]}$ on $\Funle(\calm,\calx )$. Now by direct computation
one verifies that $(Z_{[\kappa]})^{*} \,{=}\, Z_{[2-\kappa]}$ as 
comonads. Thus the statement follows. 
\end{proof}

We refer to the functors $\wPsile$ and $\Phile$ in the commuting triangle
\eqref{equation:factor-EW-module} 
as the \emph{$($left exact$)$ module Eilenberg-Watts equivalences}.

By applying this result to the opposite category, an isomorphism analogous to 
\eqref{eq:EW:coendzz=...} holds for ends instead of coends: We have an isomorphism
  \be
  \int_{\!z\in \cent^\kappa(\calm)}\!\! \cc z\boxtimes z
  \ {\cong} \int_{m\in\calm}\! \cc m\boxtimes Z^{[\kappa]}(m)
  \ee
of objects in $\CC{\cent^\kappa(\calm)} \boti \cent^\kappa(\calm)$,            
where $Z^{[\kappa]}$ is the $\kappa$-twisted central monad, which has
$m \,{\mapsto}\! \int^{a\in\cala}\! a \act m \ract a^{[\kappa-3]}$ as underlying 
endofunctor \Cite{Eq.\,(3.17)}{fScS4}.

\begin{Remark}
Recall that the category of comodules over the central comonad \eqref{eq:defZ}
of a finite tensor category $\cala$ is equivalent to the center $\calz(\cala)$. Thus
in this case the result \eqref{eq:EW:coendzz=...} gives the isomorphism
  \be
  \int^{z \in \calz(\cala)}\!\! \cc{z} \;{\boxtimes}\, z \,\cong
  \int^{b \in \cala}\! \cc{b} \;{\boxtimes} \int_{\!a\in\cala}\! a \act b \ract a^{\vee}
  \label{eq:Coend-Center}
  \ee
of objects in $\cc{\calz(\cala)} \boti \calz(\cala)$, where the two balancings on 
each the two tensor factors on the right hand side are provided by the isomorphisms 
\eqref{eq:Action-boxtimes}. Note that the balancing on the first factor involves
both the $b$- and the $a$-variables.
This reproduces the description of the coend of the center of a fusion category,
as given in \Cite{Sect.\,9.3}{brVi5} and recalled in \Cite{Sect.\,9.2.3}{TUvi}. Indeed,
in that work (note their different convention for duals in $\cala$) it is shown that 
  \be
  \int^{z\in\calz(\cala)}\!\!
  z^{\vee} {\otimes}\, z \,~\cong \int^{b\in\cala}\!\! T(b)^{\vee} \oti b
  \label{eq:Coend-Center-BV}
  \ee
as objects in $\calz(\cala)$, where $T(b)$ appearing on the right hand side is the coend
$T(b) \,{=}\, \int^{a} a^\vee {\otimes}\, b \oti a$, so that
$T(b)^\vee \,{\cong}\, \int_a  a^{\vee} {\otimes}\, {b}^{\vee} {\otimes}\, {a}^{\vee\vee}$.
If we combine \eqref{eq:Coend-Center} with the isomorphism \eqref{eq:Action-boxtimes} and 
then apply the exact functor from $\cc{\cala} \boti \cala$ to $\cala$ that is given by 
$\cc{a} \boti b \,{\mapsto}\, {a}^{\vee} {\otimes}\, b$, we obtain  
  \be
  \bearll \displaystyle
  \int^{z \in \calz(\cala)}\!\!\! {z}^{\vee} {\otimes}\, z \!\!\!&\displaystyle
  \cong \int^{b\in\cala}\!\!\!\!
  \int_{\!a\in\cala} (a^\vee {\otimes}\, b \oti a )^\vee \oti b
  \Nxl2 & \displaystyle
  \cong \int^{b\in\cala}\!\!\!\! \int_{\!a\in\cala}\!
  {a}^{\vee} {\otimes}\, {b}^{\vee} {\otimes}\, a^{\vee \vee} {\otimes}\, b \,,
  \eear
  \ee
thus reproducing \eqref{eq:Coend-Center-BV}. 
 \end{Remark}

\begin{Remark}
In the semisimple case, i.e.\ if $\cala$ is a fusion category, ends and coends coincide,
and \eqref{eq:Coend-Center} reduces to an isomorphism
  \be
  \bigoplus_{\alpha} \cc{z_\alpha} \;{\boxtimes}\, z_\alpha \,\cong~
  \bigoplus_{i,j} \cc{b_i} \;{\boxtimes}\, a_j \act b_i \ract a_j^{\vee}
  \label{eq:Coend-Center-ses}
  \ee
of objects in $\calz(\cala)$,
where $\alpha$ ranges over a set of representatives for the isomorphism classes of 
simple objects of $\calz(\cala)$, while $i$ and $j$ range over a set representing the 
isomorphism classes of simples in $\cala$. The isomorphism \eqref{eq:Coend-Center-ses} 
relates $\cala$ and its Drinfeld center. It leads to  Lemma 7.4 of \cite{balKi} after 
applying the functor $\cc{\calz(\cala)} \boti \calz(\cala) \To \calz(\cala)$ that is 
given by $\cc{x} \boti y \,{\mapsto}\, x^{\vee} {\otimes}\, y$ to both sides. 
In case $\cala$ has a spherical structure, the isomorphism \eqref{eq:Coend-Center-ses} 
plays a central role in approaches \cite{balKi,tuVi} to relate the Turaev-Viro TFT 
for a spherical fusion category $\cala$ with the Reshetikhin-Turaev 
TFT for $\calz(\cala)$, in which it is responsible for the factorization property of 
the state spaces: It is the algebraic counterpart of factoring a surface that has been
obtained by gluing two surfaces along a common circular boundary, compare
\Cite{Thm.\,5.2}{fScS4}. 
\end{Remark}

\begin{Remark}
Statements analogous to those of Theorem \ref{thm:EW:coendzFz} hold for right exact 
functors. In particular we have explicit equivalences 
  \be
  \Funre^{\kappa-4}(\calm,\calx)\,\simeq\,\Funre(\cent^\kappa(\calm),\calx) \,.
  \ee
By considering the $\cala$-bimodule $\calm \,{=}\, \caln_\Cala\,{\boxtimes}\, {}_{\cala}\caln'$,
where $\caln$ and $\caln'$ are a right and a left $\cala$-mo\-du\-le, respectively,
with Proposition \ref{proposition:rel-Del} we then recover for $\kappa \,{=}\, {+}4$ the 
statement that the right exact relative Deligne product is equivalent to the twisted center.
Recalling from \eqref{eq:NoM-Lex-Rex} that the Eilenberg-Watts equivalences provide 
an equivalence of the categories of left and right exact functors between finite categories,
we arrive at the following module version of that picture:
  \be
  \begin{tikzcd}[row sep=2.6em]
  \Funle^{\kappa}(\calm,\calx ) \ar{dd} \ar{rr}{\Theta^{\rm l}} \ar{dr}[left,yshift=-5pt]{\wPsile}
  & ~ & \Funle(\cent^\kappa(\calm),\calx) \ar{dd}
  \\
  ~ & \CC{\cent^\kappa(\calm)} \boti \calx \ar{ur}[right,yshift=-4pt]{\Phile} \ar{dr}{\Phire} & ~
  \\
  \Funre^{\kappa-4}(\calm,\calx ) \ar{rr}{\Theta^{\rm r}} \ar{ur}[yshift=-2pt]{\wPsire}
  & ~ & \Funre(\cent^\kappa(\calm),\calx)_{\phantom|}
  \end{tikzcd}
  \label{equation:factor-EW-module-le-re}
  \ee
where the unlabeled vertical arrows are defined by the commutativity of the diagram. 
\end{Remark}

Combining the module Eilenberg-Watts equivalences \eqref{equation:factor-EW-module}
with Lemma \ref{Lemma:EW-adjun} we obtain the following generalization of Corollary
\ref{cor:thecor}, which is a module version of Lemma \ref{Lemma:EW-adjun}:

\begin{Lemma} \label{Lemma:Module-ew-adj}
Let $\calm$ and $\caln$ be finite left modules over a finite tensor category $\cala$.
For any finite category $\calx$ the equivalences from Lemma \ref{Lemma:EW-adjun} 
induce equivalences
  \be
  \Funle_{\cala}(\caln, \calm \boti \calx)
  \,\simeq\, \Funlebal(\caln \boti \calm \opm, \calx) 
  \,\simeq\, \Funle(\calz^{0}(\caln \boti \calm \opm),\calx) \,.
  \label{eq:module-=EW-adj}
  \ee
\end{Lemma}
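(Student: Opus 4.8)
The plan is to prove the two equivalences in \eqref{eq:module-=EW-adj} from right to left, treating them as a direct combination of Lemma \ref{Lemma:EW-adjun} (the parameter version of ordinary Eilenberg--Watts) and the material of Sections \ref{sec:EWbal} and the module Eilenberg--Watts equivalence \eqref{equation:factor-EW-module}. The second equivalence, namely $\Funlebal(\caln \boti \calm\opm, \calx) \simeq \Funle(\calz^0(\caln \boti \calm\opm),\calx)$, is essentially a special case of part (i) of Theorem \ref{thm:EW:coendzFz}: one takes the $\cala$-bimodule in that theorem to be $\caln_{\!\cala} \boti {}_\cala\calm\opm$ (a right $\cala$-module tensored with a left $\cala$-module, hence an $\cala$-bimodule) and $\kappa = 0$, so that $0$-twisted balanced functors out of this bimodule are exactly the balanced functors of Definition \ref{def:enoM.3.1}, and the commuting triangle \eqref{equation:factor-EW-module} supplies the equivalence with $\Funle(\calz^0(\caln \boti \calm\opm),\calx)$. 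So this half requires only the observation that $\Funle^0(\caln \boti \calm\opm,\calx) = \Funlebal(\caln \boti \calm\opm,\calx)$, which is immediate from comparing Definitions \ref{def:enoM.3.1} and \ref{def:twistedbalanced}.

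For the first equivalence, $\Funle_{\cala}(\caln, \calm \boti \calx) \simeq \Funlebal(\caln \boti \calm\opm, \calx)$, the idea is to run the argument of Section \ref{sec:EWbal} that produced \eqref{eq:balanceXiF}, but now with a parameter category $\calx$ inserted. Concretely, I would start from the equivalence $\Xi_{\caln,\,\calm\boti\calx}\colon \Funle(\caln, \calm\boti\calx) \xrightarrow{\simeq} \Funle(\caln \boti \cc{\calm\boti\calx},\vect)$ from \eqref{eq:catbim}, rewrite the target using $\cc{\calm\boti\calx} \simeq \calm\opm \boti \cc{\calx}$ together with the Eilenberg--Watts adjunction of Lemma \ref{Lemma:EW-adjun} to move $\cc{\calx}$ out, obtaining $\Funle(\caln \boti \calm\opm, \calx)$; more precisely Lemma \ref{Lemma:EW-adjun} gives $\Funle(\caln\boti\calm\opm\boti\cc\calx,\vect) \simeq \Funle(\caln\boti\calm\opm, \cc{\cc\calx}\boti\vect) \simeq \Funle(\caln\boti\calm\opm,\calx)$. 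Then I would check that under this chain of equivalences the subcategory of $\cala$-module functors $\caln \to \calm\boti\calx$ (where $\cala$ acts only on the $\calm$-factor of $\calm\boti\calx$) corresponds precisely to the subcategory of balanced functors, by exactly the same computation as the displayed calculation just before \eqref{eq:balanceXiF}: a module-functor isomorphism $F(a\act n) \cong a\act F(n)$ translates, via $\Xi$ and the duality of $\cala$, into the balancing isomorphism $\Xi(F)(a\act n \boti \cc m) \cong \Xi(F)(n \boti \cc m \ract a)$, and conversely, since $\Xi$ is an equivalence, a balancing pulls back to a module structure.

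The main obstacle I anticipate is bookkeeping of the $\cala$-actions across the various equivalences: one must verify that the Deligne-product associativity isomorphism $\cc{\calm\boti\calx} \simeq \calm\opm\boti\cc\calx$ and the Eilenberg--Watts adjunction of Lemma \ref{Lemma:EW-adjun} are compatible with the $\cala$-action in the strong sense that they carry the module-functor condition to the balanced-functor condition on the nose (i.e.\ coherently in $a\in\cala$), rather than just giving an abstract equivalence of the ambient functor categories. This is the place where the identifications \eqref{eq:Action-boxtimes} and the definition of the right $\cala$-module structure $\calm\opm$ on $\calmopp$ (from \eqref{eq:opm-opmmm}) have to be invoked carefully; the shift conventions in the duals must be tracked so that one lands in the \emph{untwisted} balanced category and hence, via Theorem \ref{thm:EW:coendzFz}(i) with $\kappa=0$, in $\Funle(\calz^0(\caln\boti\calm\opm),\calx)$. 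Everything else is then a formal concatenation of equivalences already established in the excerpt.
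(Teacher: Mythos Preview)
Your proposal is correct and follows essentially the same approach as the paper. The paper's proof consists of two sentences: the first equivalence is called a ``straightforward generalization of Proposition \ref{Proposition:ModEW-bal}'' and the second is referred to Theorem \ref{thm:EW:coendzFz}(i); your plan spells out exactly these two steps in more detail, including the use of $\Xi$ and Lemma \ref{Lemma:EW-adjun} to insert the parameter $\calx$, which is precisely what that ``straightforward generalization'' amounts to.

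One small slip: in your parenthetical you describe $\caln \boti \calm\opm$ as ``a right $\cala$-module tensored with a left $\cala$-module'', but in the lemma $\caln$ is a \emph{left} $\cala$-module and $\calm\opm$ is a \emph{right} $\cala$-module; the resulting $\cala$-bimodule structure on $\caln \boti \calm\opm$ has the left action on the $\caln$-factor and the right action on the $\calm\opm$-factor. This is harmless for the argument, but it is worth getting straight so that the identification $\Funle^{0}(\caln \boti \calm\opm,\calx) = \Funlebal(\caln \boti \calm\opm,\calx)$ and the resulting $\kappa=0$ twist come out as claimed.
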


\begin{proof}
The first equivalence is a straightforward generalization of Proposition 
\ref{Proposition:ModEW-bal}. The second is Theorem \ref{thm:EW:coendzFz}(i).
\end{proof}

The case of bimodule functors is treated as follows. For bimodules 
${}_{\cala}\calm_{\calb}$ and ${}_{\cala}\caln_{\calb}$, there are two corresponding 
comonads on $\Funle(\caln,\calm)$, with obvious distributive law, whose comodules 
are the corresponding categories of bimodule functors. For the $\cala$-action this 
is $Z_{[2],\cala}$. see Lemma \ref{lem:comodules}(ii). For the $\calb$-action, 
the relevant comonad $Z_{[2],\calb}$ on $F\colon \caln \To \calm$ is given by
$Z_{[2],\calb}(F)(n) \,{=}\, \int_{b \in \calb}F(n \Ract b) \Ract b^{\vee}$. 
To summarize, $Z \,{:=}\, Z_{[2],\cala} \,{\circ}\, Z_{[2],\calb}$ is
canonically a comonad on the category $\Funle(\caln,\calm)$ whose category of 
comodules is equivalent to the category $\Funle_{\cala,\calb}(\caln,\calm)$
of bimodule functors.

\begin{cor} \label{cor:lex-framed-app}
Let ${}_{\cala}\calm_{\calb}$ and ${}_{\cala}\caln_{\calb}$ be finite bimodules 
over finite tensor categories $\cala$ and $\calb$.
The Eilenberg-Watts equivalences induce an equivalence 
  \be
  \Funle_{\cala,\calb}(\caln,\calm)
  \,\simeq\, \calz_{\cala,\calb}(\caln\opm \boti \calm)
  \label{eq:equ-lex-framed-cent-0}
  \ee
of categories, where the category $\caln\opm$ is equipped with the left $\calb$-action 
$b \Act \cc{m} \,{=}\, \cc{m\Ract {}_{}^{\vee\!}b}$ and $ \calz_{\cala,\calb}$ 
denotes the center with respect to the $\cala$ and $\calb$-actions. 
\end{cor}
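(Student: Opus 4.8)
The plan is to realise both sides of \eqref{eq:equ-lex-framed-cent-0} as categories of comodules over comonads that correspond to each other under an Eilenberg-Watts equivalence, in the spirit of the proof of Theorem~\ref{thm:EW:coendzFz}. By the discussion immediately preceding the statement, $\Funle_{\cala,\calb}(\caln,\calm) \,{\simeq}\, \Comod_{Z}(\Funle(\caln,\calm))$, where $Z \,{=}\, Z_{[2],\cala} \,{\circ}\, Z_{[2],\calb}$ is, via the distributive law recalled there, the composite of the twisted central comonads \eqref{eq:def:comonadZn} for the $\cala$- and the $\calb$-bimodule structures on $\Funle(\caln,\calm)$. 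Since $Z_{[2]}$ is defined by an end it is left exact, hence so is $Z$; therefore the Eilenberg-Watts equivalence $\Psile\colon \Funle(\caln,\calm) \,{\xrightarrow{\,\simeq\,}}\, \calnopp \boti \calm$, $F \,{\mapsto}\, \int^{n\in\caln}\! \cc n \boti F(n)$, is an adjoint equivalence along which $Z$ can be transferred in the sense of Proposition~\ref{proposition:induced-monad}. By part~(ii) of that proposition the categories of comodules over $Z$ and over the transferred comonad $\widetilde Z \,{:=}\, \Psile \,{\circ}\, Z \,{\circ}\, \Phile$ on $\calnopp\boti\calm$ are equivalent, so it remains to identify $\widetilde Z$ and to compute its comodules.

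First I would endow $\calnopp \boti \calm$ with two commuting bimodule structures: the $\cala$-bimodule structure whose left action is the action of $\calm$ on the second factor and whose right action is $(\cc n \boti m) \Ract a \,{=}\, \cc{a^\vee \Act n} \boti m$ on the first factor, and the $\calb$-bimodule structure whose right action is the action of $\calm$ on the second factor and whose left action is $b \Act (\cc n \boti m) \,{=}\, \cc{n \Ract \Vee b} \boti m$ on the first factor; with these structures $\calnopp\boti\calm$ becomes the bimodule $\caln\opm \boti \calm$ appearing in the statement. The key point is then to show that $\widetilde Z$ coincides with the composite $Z_{[2],\cala} \,{\circ}\, Z_{[2],\calb}$ of the twisted central comonads \eqref{eq:def:comonadZn} for these two structures on $\caln\opm\boti\calm$, which I would verify one factor at a time. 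For the $\cala$-factor the computation is the one already carried out in the proof of Proposition~\ref{Proposition:comonadEW}(i) and, in the module language, in Section~\ref{sec:EWm}: using \eqref{eq:lem:fScS2:3.8} --- equivalently \eqref{eq:Action-boxtimes} applied to the $\cala$-bimodule $\caln\opm\boti\calm$ --- one sees that $\Psile$ intertwines the two $\cala$-bimodule structures and hence that the transfer of $Z_{[2],\cala}$ along $\Psile$ is again $Z_{[2],\cala}$. For the $\calb$-factor one argues in the same way; the only difference is that the $\calb$-action on the functor category lives on the \emph{source} $\caln$, so under $\Psile$ it is transported onto the first Deligne factor, and running \eqref{eq:Action-boxtimes} while keeping track of which duals are left and which are right (as emphasised after that formula) one obtains precisely the left $\calb$-action $b \Act \cc n \,{=}\, \cc{n \Ract \Vee b}$ quoted in the statement. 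Finally, the distributive law on $\Funle(\caln,\calm)$ is natural with respect to the Eilenberg-Watts equivalence, so it transfers to the obvious distributive law between the two central comonads on $\caln\opm\boti\calm$, whence $\widetilde Z \,{=}\, Z_{[2],\cala} \,{\circ}\, Z_{[2],\calb}$ as comonads on $\caln\opm\boti\calm$.

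With $\widetilde Z$ so identified, the rest is immediate: by Lemma~\ref{lem:comodules}(i) a comodule over $Z_{[2],\cala}$ on an $\cala$-bimodule amounts to an $\cala$-balancing $a\Act X \,{\xrightarrow{\,\cong\,}}\, X \Ract a$, and applying this to the $\cala$-factor and, via the distributive law, simultaneously to the $\calb$-factor identifies $\Comod_{\widetilde Z}(\caln\opm\boti\calm)$ with the category of objects of $\caln\opm\boti\calm$ carrying compatible $\cala$- and $\calb$-balancings, i.e.\ with $\calz_{\cala,\calb}(\caln\opm\boti\calm)$. Concatenating the equivalences yields \eqref{eq:equ-lex-framed-cent-0}. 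As a variant that avoids comonads altogether, one could instead argue directly as in Section~\ref{sec:EWm}: for a left exact bimodule functor $F\colon\caln\To\calm$ the object $\Psile(F) \,{=}\, \int^{n}\! \cc n\boti F(n)$ acquires an $\cala$- and a $\calb$-balancing from \eqref{eq:Action-boxtimes} together with the exactness of the action functors, and conversely a compatible pair of balancings on $\cc n\boti m$ turns the left exact functor $\Hom_\caln(n,-)\oti m$ into a bimodule functor.

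The main obstacle is the identification of $\widetilde Z$ in the middle step: one must check that transferring $Z_{[2],\calb}$ produces exactly the left $\calb$-action $b\Act\cc n \,{=}\, \cc{n\Ract\Vee b}$ and not one twisted by some further power of the bidual, and that the distributive law genuinely descends to one on $\caln\opm\boti\calm$ that is compatible with the two central comonads, so that $\widetilde Z$-comodules really form the \emph{joint} center rather than an iterated construction. This is the familiar left-versus-right-dual bookkeeping warned about after \eqref{eq:Action-boxtimes}; once it is settled, everything else is a direct appeal to Propositions~\ref{proposition:induced-monad} and \ref{Proposition:comonadEW} and to Lemma~\ref{lem:comodules}.
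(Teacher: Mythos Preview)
Your proposal is correct and follows essentially the same route as the paper's own proof: transfer the composite comonad $Z \,{=}\, Z_{[2],\cala}\,{\circ}\,Z_{[2],\calb}$ along the Eilenberg-Watts equivalence via Proposition~\ref{proposition:induced-monad}, identify the transferred comonad on $\calnopp\boti\calm$, and recognise its comodules as the joint center $\calz_{\cala,\calb}(\caln\opm\boti\calm)$. The paper compresses the middle step into a one-line ``straightforward computation'' yielding the explicit end formula $\cc n\boti m \,{\mapsto}\, \int_{a}\!\int_{b}\,\cc{a^{\vee\vee}\Act n\Ract{}^{\vee\vee}b}\boxtimes a\Act m\Ract b$, whereas you unpack this factor by factor and also offer the direct Section~\ref{sec:EWm}-style variant; but the underlying argument is the same.
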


\begin{proof}
According to Proposition \ref{Proposition:comonadEW} we need to transfer the comonad $Z$ along
the Eilenberg-Watts equivalence $\Funle(\caln,\calm) \,{\simeq}\, \cc{\caln} \boti\calm$. A
straightforward computation yields the comonad on $\cc{\caln} \boti\calm$ that is given by 
  \be
  \cc{n} \boti m \,\longmapsto \int_{\!a\in\cala} \int_{\!b\in\calb}
  \cc{a^{\vee\vee} \act n \Ract {}^{\vee \vee\!}b} \,\boxtimes a \act n \Ract b \,.
  \ee
The category of comodules over this comonad is $\calz_{\cala,\calb}(\caln\opm \boti \calm)$.
\end{proof}

\begin{Remark}
Given any three $\cala$-$\calb$-bimodules ${}_{\cala}\calm_{\calb}$, ${}_{\cala}\caln_{\calb}$ 
and ${}_{\cala}\calk_{\calb}$, the equivalence \eqref{eq:equ-lex-framed-cent-0} induces
a composition operation on the centers: The composition
  \be
  \Funle_{\cala,\calb}(\caln,\calm) \times \Funle_{\cala,\calb}(\calm,\calk) 
  \,\xrightarrow{\phantom{x}}\, \Funle_{\cala,\calb}(\caln,\calk)
  \ee
of bimodules functors yields a functor 
  \be
  \calz_{\cala,\calb}(\caln\opm \boti\calm) \boxtimes \calz_{\cala,\calb}(\calm\opm \boti \calk) 
  \,\xrightarrow{\phantom{x}}\, \calz_{\cala,\calb}(\caln\opm \boti\calk) \,.
  \label{eq:induced-comp}
  \ee
This is given by contraction with the $\Hom$ functor:
  \be
  (\cc{n} \boti m) \boxtimes (\cc{m'} \boti k)
  \xmapsto{\phantom{x}} \Hom_\calm(m',m) \otimes (\cc{n} \boti k) \,.
  \ee
This is seen as follows. In \Cite{Cor.\,3.7}{fScS2} the corresponding functor in the
absence of bimodule structures is shown to be given by the $\Hom$-functor. 
Since the composition of functors induces a composition of bimodule functors, the induced 
composition \eqref{eq:induced-comp} is given by the $\Hom$-functor as well. 
The balancing on the right hand side follows by combining the balancings on the left
and the dualities of $\cala$ and $\calb$ inside the $\Hom$ space.
\end{Remark}

\vskip 3em

\noindent
{\sc Acknowledgements:}\\[.3em]
JF is supported by VR under project no.\ 2017-03836. CS is partially supported by the
RTG 1670 ``Mathematics inspired by String theory and Quantum Field Theory''
and by the Deutsche Forschungsgemeinschaft (DFG, German Research Foundation) under
Germany's Excellence Strategy - EXC 2121 ``Quantum Universe''- QT.2.

     \newpage

\newcommand\wb{\,\linebreak[0]} \def\wB {$\,$\wb}
\newcommand\Bi[2]    {\bibitem[#2]{#1}}
\newcommand\inBo[8]  {{\em #8}, in:\ {\em #1}, {#2}\ ({#3}, {#4} {#5}), p.\ {#6--#7} }
\newcommand\J[7]     {{\em #7}, {#1} {#2} ({#3}) {#4--#5} {{\tt [#6]}}}
\newcommand\JO[6]    {{\em #6}, {#1} {#2} ({#3}) {#4--#5} }
\newcommand\BOOK[4]  {{\em #1\/} ({#2}, {#3} {#4})}
\newcommand\Prep[2]  {{\em #2}, preprint {\tt #1}}
\def\adma  {Adv.\wb Math.}
\def\alrt  {Algebr.\wb Represent.\wB Theory}         
\def\apcs  {Applied\wB Cate\-go\-rical\wB Struc\-tures}
\def\coma  {Con\-temp.\wb Math.}
\def\imrn  {Int.\wb Math.\wb Res.\wb Notices}
\def\jims  {J.\wb Indian\wb Math.\wb Soc.}
\def\joal  {J.\wB Al\-ge\-bra}
\def\joms  {J.\wb Math.\wb Sci.}
\def\pams  {Proc.\wb Amer.\wb Math.\wb Soc.}
\def\quto  {Quantum Topology}
\def\tams  {Trans.\wb Amer.\wb Math.\wb Soc.}

\small

\end{document}